\documentclass[a4paper]{amsart} 
\usepackage[a4paper]{geometry}
\usepackage[utf8]{inputenc}
\usepackage{babel}
\usepackage{amsthm,amsmath,amssymb,amsfonts,amscd}
\usepackage{mathtools}
\usepackage{hyperref}
\usepackage{xcolor}
\usepackage{subfiles}
\usepackage{comment}
\usepackage{enumitem}
\usepackage{xparse}

\usepackage[textsize=tiny,textwidth=27mm]{todonotes}

\newtheorem{theorem}{Theorem}[section]
\newtheorem{lemma}[theorem]{Lemma}
\newtheorem{prop}[theorem]{Proposition}
\theoremstyle{definition}
\newtheorem{definition}{Definition}
\newtheorem*{remark}{Remark}

\newcommand{\R}{\ensuremath{\mathbb{R}}}

\newcommand{\T}{\ensuremath{\mathbb{T}}}

\newcommand{\Lipa}[1][\gamma]{\ensuremath{\mathcal{C}^#1}} 
\newcommand{\Na}[2][\gamma]{\ensuremath{\left\|#2\right\|_{#1}}} 
\newcommand{\SNa}[2][\gamma]{\ensuremath{\left|#2\right|_{#1}}} 
\newcommand{\Np}[2][\infty]{\ensuremath{\left\|#2\right\|_{L^#1}}}
 
\newcommand{\LipUa}[1][\gamma]{\ensuremath{\Lipa[{{1,#1}}]}}

\newcommand{\Lp}[2][2]{\ensuremath{\left\|#2\right\|_{L^#1(\T)}}}
\newcommand{\Ck}[2][2]{\ensuremath{\left\|#2\right\|_{C^#1(\T)}}} 
\newcommand{\Hs}[1]{\ensuremath{\left\|#1\right\|_{H^3(\T)}}} 
\newcommand{\Hd}[1]{\ensuremath{\left\|#1\right\|_{H^2(\T)}}} 

\NewDocumentCommand{\diff}{O{x'} O{x} m}{%
  \ensuremath{\Delta_{#1}#3 (#2)}%
}
\newcommand{\intT}{\ensuremath{\int_\T}} 
\newcommand{\intTT}{\ensuremath{\int_\T\int_\T}}

\author{Marc Magaña}
\address{Departament de Matemàtiques -- Universitat Autònoma de Barcelona\\
08193 Bellaterra, Barcelona, Spain}
\email{marc.magana@uab.cat}

\title{Unified theory for regularity persistence of vortex patch boundaries}

\begin{document}

\begin{abstract}
We establish a unified local theory for the persistence of Sobolev regularity of
vortex patch boundaries in a family of two-dimensional active scalar equations with
radial convolution kernels \(K(|x-y|)\). The class includes the 2D Euler equation,
the generalized SQG equation in the locally integrable range \(0<\beta<1\), and the
quasi-geostrophic shallow water equation. Under natural assumptions on \(K\)
(smoothness, integrability near the origin, monotonicity, and polynomial growth), we
prove that if the initial boundary belongs to \(H^3(\mathbb T)\) and satisfies the
arc-chord condition, then the contour dynamics equation admits a unique local solution
in \(C([0,T];H^3(\mathbb T))\). Under a stronger integrability condition on the kernel,
we also obtain local existence of \(H^2\) solutions. The proof combines Sobolev energy
estimates for the contour equation with quantitative control of the arc-chord quantity.
\end{abstract}

\maketitle

\section{Introduction}
We consider the Cauchy problem of the following 2D active scalar equation:
\begin{equation}\label{eq:transport}
    \left\{\begin{aligned}
        &\partial_t \omega + v\cdot \nabla \omega=0, \quad (t,x)\in \R^+\times \R^2\\ 
        &v= \nabla^\perp \psi, \\
        &\omega(0,\cdot)=\omega_0(x),\quad x\in \R^2,
    \end{aligned}\right.
\end{equation}
where $\nabla^\perp= (-\partial_2, \partial_1)^t,$ $v=(v_1,v_2)$ is the divergence free velocity field, $\omega$ is a scalar function (referred to as the active scalar) and the stream function $\psi$ is prescribed through a kernel $K$:
\begin{equation}\label{eq:stream}
    \psi(x)=\int_{\R^2} K(x,y)\omega(y)dy.
\end{equation}

The system \eqref{eq:transport}-\eqref{eq:stream} provides a unified framework that includes several classical hydrodynamic models, depending on the kernel $K$. Some important examples are given below.

\begin{itemize}
\item \textbf{2D Euler equation.}
If 
$$K(x,y) = -\frac{1}{2\pi}\log|x-y|, 
\text{ that is, } \psi(x) = (-\Delta)^{-1}\omega(x),$$
then \eqref{eq:transport}–\eqref{eq:stream} reduces to the 2D Euler equations in vorticity formulation. 
In this case, $\omega$ represents the fluid vorticity, and the system describes the evolution of an incompressible inviscid flow in two dimensions, one of the fundamental models in fluid dynamics.

\item \textbf{Generalized surface quasi-geostrophic (gSQG) equation.} For
$$K(x,y) = c_\beta |x-y|^{-\beta}, 
\; \beta \in (0,2),
\text{ that is, } \psi(x) = (-\Delta)^{-1+\frac{\beta}{2}}\omega(x),$$
where $c_\beta = \frac{\Gamma(\beta/2)}{2^{2-\beta}\pi\Gamma(1-\beta/2)},$ the system \eqref{eq:transport}–\eqref{eq:stream} corresponds to the inviscid gSQG. 
In particular, for $\beta=1$, one recovers the classical surface quasi-geostrophic (SQG) equation, introduced as a model for atmospheric circulation near the tropopause~\cite{sQG95} and for ocean dynamics in the upper layers~\cite{SQG2002}. 
For $0<\beta<1$, the model was proposed in~\cite{CordobaRodrigoaSQG} as an interpolation between the 2D Euler and the SQG equations.

\item \textbf{Quasi-geostrophic shallow water (QGSW) equation.} If 
$$K(x,y) = \frac{1}{2\pi}\mathbf{K}_0(\varepsilon |x-y|), 
\; \varepsilon>0,
\text{ that is, } \psi(x) = (-\Delta+\varepsilon^2)^{-1}\omega(x),$$
where $\mathbf{K}_0$ denotes the modified Bessel function of the second kind, then the system \eqref{eq:transport}–\eqref{eq:stream} becomes the QGSW equation. 
This model can be derived asymptotically from the rotating shallow water equations in regimes of rapid rotation and small surface variations~\cite{vallis2017atmospheric}. 
The parameter $\varepsilon$ is known as the \textit{inverse Rossby deformation length}, and the regime $\varepsilon \ll 1$ corresponds physically to a nearly rigid free surface.
\end{itemize}

Each of these models has been extensively studied, and many fundamental results are known regarding well-posedness, regularity, and long-time behavior. In particular, for the 2D Euler equation, Yudovich \cite{yudovich} proved that the vorticity equation is well-posed in \(L^\infty_c\). A \textit{vortex patch} is a special weak solution of this equation when the initial condition is the characteristic function of a bounded domain \(\Omega_0\). Since the vorticity equation is a transport equation, vorticity is conserved along trajectories, meaning that at any time \(t\), it remains the characteristic function of some domain \(\Omega(t)\).

More generally, for the family of active scalar equations \eqref{eq:transport}–\eqref{eq:stream}, we are interested in solutions of the same form, namely

\begin{equation}\label{eq:charDomini}
\omega(x,t)=\left\{\begin{array}{ll}
     1, & x\in\Omega(t), \\
     0, & \mathbb{R}^2\setminus\Omega(t),
\end{array}\right.
\end{equation}

where \(\Omega(t)\) is a bounded domain whose boundary is parametrized as a closed curve

\[
\partial\Omega(t)=\{\gamma(x,t)=(\gamma_1(x,t),\gamma_2(x,t))\,:\, x\in\T\},
\]

where \(\T=\mathbb{R}/(2\pi\mathbb{Z})\) denotes the one-dimensional torus, identified with \([-\pi,\pi]\) with periodic boundary conditions. As in the Euler case, the transport structure of \eqref{eq:transport} guarantees that if the initial datum is a characteristic function, the solution remains a characteristic function of some evolving domain for all times.

A fundamental question in the study of vortex patches is whether the initial regularity of the boundary persists over time. More precisely, if the boundary of the initial domain \(\partial\Omega_0\) belongs to a given regularity class (for instance, a Hölder space \(\mathcal{C}^{k,\sigma}\) or a Sobolev space \(W^{s,p}\)), one seeks to determine whether \(\partial\Omega(t)\) remains in the same class for all times, or at least locally in time.

For the 2D Euler equation in the plane, this problem was raised in the 1980s and resolved in the Hölder setting by Chemin \cite{chemin1993persistance}, who proved global persistence of \(\LipUa[\sigma]\) regularity (\(0<\sigma<1\)) using paradifferential calculus. Shortly thereafter, Bertozzi and Constantin \cite{bertozzi1993global} obtained an alternative proof based on geometric methods in classical analysis. Related propagation results for singular vortex patches were later obtained, for instance, in \cite{danchin1997singular}. These results were later extended to other domains, such as bounded domains and the half-plane \cite{depauw1999poche, kiselev2019global}. On the other hand, it has recently been shown by Kiselev and Luo that the Euler vortex patch problem is ill-posed in \(C^2\) \cite{kiselevLuo2022}, highlighting the delicate nature of the regularity threshold.

For the active scalar models introduced above, the persistence of boundary regularity has also been extensively studied. In the case of the SQG equation (\(\beta=1\)), Rodrigo \cite{rodrigo2005} proved local existence and uniqueness of \(C^\infty\) patches via a Nash--Moser implicit function theorem. For the generalized SQG equations (\(0<\beta<2\)), local-in-time persistence of Sobolev regularity for the patch boundary is by now well understood. Gancedo \cite{gancedo2008SQG} established local existence and uniqueness for patch boundaries parametrized in \(H^k(\T)\), \(k \geq 3\), for \(0<\beta<1\), and existence in the same class for \(\beta=1\). Uniqueness in the latter case was later proved by Córdoba, Córdoba, and Gancedo \cite{GancedoUniquenessSQG}. These results were subsequently extended by Chae, Constantin, Córdoba, Gancedo, and Wu \cite{chae2012generalized} to the full range \(0<\beta<2\), with \(k \geq 4\).

More recently, Gancedo and Patel \cite{gancedo2021local} obtained local existence for patch boundaries in \(H^2(\T)\) for \(0<\beta<1\) and in \(H^3(\T)\) for \(1<\beta<2\). In the critical case \(\beta=1\), Gancedo, Nguyen, and Patel \cite{GNP22} proved local well-posedness in \(H^{2+s}\) for arbitrarily small \(s>0\).

Several ill-posedness results are known for these models. Kiselev and Luo \cite{kiselevLuoAlphaSQG} proved strong ill-posedness of \(\mathcal{C}^{2,\sigma}\) and \(W^{2,p}\) (\(p\neq2\)) patch solutions for gSQG in the range \(0<\beta<1\). The formation of splash singularities, that is, configurations where the boundary self-intersects at a point while remaining otherwise smooth, has been ruled out for the full range of \(\beta\) in \cite{GS14, KLSplash23, MR4736524}. In the half-plane with a rigid boundary, finite-time singularity formation has been established for \(0<\beta<\tfrac{1}{3}\) (see \cite{MR3549626, MR3666567, gancedo2021local}; see also \cite{MR4993809} for further improvements).

For the quasi-geostrophic shallow water (QGSW) equation, Yudovich theory and global well-posedness of vortex patches in \(\LipUa[\sigma]\) was recently established in \cite{magana2026QGSW}. More generally, for the 2D loglog-Euler type equation, Tan, Xue, and Xue \cite{TXX2025} revisited Elgindi’s result \cite{MR3158812} and proved global propagation of patch boundary regularity with a loss of Hölder exponent, namely from \(\LipUa[\sigma]\) to \(\mathcal{C}^{1,\sigma-\varepsilon}\). They also pointed out that, under additional assumptions on the multiplier, this loss can be avoided. In the complementary regime, where the Osgood condition fails, Miao, Tan, Xue, and Xue \cite{MiaoTanXueXue2024} showed that finite-time singularities may occur for patch solutions in the half-plane.

Finally, persistence of boundary regularity has also been studied for a broader class of nonlinear transport equations in which the velocity field is given directly by convolution, \(v=K*\omega\), without imposing the divergence-free condition. This includes, for instance, the aggregation equation in \(\mathbb{R}^n\) with Newtonian kernel, \(K=\nabla N\), studied in \cite{bertozzi2012aggregation, bertozzi2016regularity}. Another important contribution is due to Cantero, Mateu, Orobitg, and Verdera \cite{cantero2021regularity}, who consider odd kernels in \(\mathbb{R}^n\), homogeneous of degree \(-(n-1)\) and smooth away from the origin, and prove global-in-time existence and uniqueness of patch solutions with \(\LipUa[\sigma]\) boundary, together with persistence of this regularity. Their uniqueness result is formulated within the class of characteristic functions of \(\LipUa[\sigma]\) domains, rather than as a general Yudovich-type theory for arbitrary bounded compactly supported data. Their framework allows for velocity fields with nontrivial divergence and includes, among other examples, kernels of the form \(L(\nabla N)\) where $L$ is a linear mapping from $\R^n$ into itself, such as the 3D quasi-geostrophic equation and the Cauchy transport equation in the plane. The planar case was later revisited by Verdera \cite{VerderaRevisited}, where the same global patch regularity result is obtained by different methods. For the Cauchy kernel, a particular case of the class in \cite{cantero2021regularity}, Clop and Sengupta \cite{clop2022nonlinear} proved existence of solutions for bounded, compactly supported initial data, even though the associated velocity field may have unbounded divergence, using compactness properties of quasiconformal mappings. In this setting, the main difficulty is that the velocity field may have non-zero, and in some cases unbounded, divergence.

In this paper, we propose a \textit{unified framework} encompassing a broad class of kernels \(K\) (including the examples above), and establish local-in-time persistence of Sobolev regularity for the patch boundary under explicit and verifiable assumptions on \(K\). Our approach is inspired by the work of Hmidi, Xue, and Xue \cite{hmidi2023unifiedtheoryvstatesstructures}, who developed a unified theory for V-state structures in active scalar equations.

More precisely, we consider radial kernels \(K:(0,\infty)\to\mathbb{R}\), namely kernels of the form
\[
K(x,y)=K(|x-y|),
\]
satisfying the following assumptions:

\begin{enumerate}[label={(A\arabic*)}]
    \item $K(r)\in C^\infty (\R\setminus \{0\}),$ \label{ass:smoothness}
    \item $\int_0^1 |K(r)| dr < \infty,$ \label{ass:integrability}
    \item There exists $ \delta >0$ such that $|K(r)|$ is decreasing on $(0,\delta]$. Moreover, for $r>\delta,$ there exists constants $C>0$ and $M\geq 0$ with $|K(r)|\leq C(1+r^M).$ \label{ass:compKernel}
    \item For $n=1,2,3$, $|K^{(n)}(r)|$ is decreasing for all $r>0$. \label{ass:monotonicity}
\end{enumerate}

The examples mentioned above (2D Euler, gSQG for $0<\beta<1$, QGSW) are all covered by our hypotheses. Further admissible kernels, including screened gSQG kernels and completely monotone radial kernels, are discussed in Appendix~\ref{app:examples}.

The evolution of the boundary \(\partial\Omega(t)\) is determined by the fact that the interface is transported by the flow. Let \(\gamma(\cdot,t)\) be a parametrization of \(\partial\Omega(t)\). Then the normal velocity of the boundary coincides with the normal component of the fluid velocity, namely
\[
(\partial_t \gamma - v(\gamma,t))\cdot n = 0,
\]
where \(n\) denotes the unit normal vector to the curve.

This identity determines only the normal component of \(\partial_t \gamma\), and therefore does not uniquely specify the evolution of the parametrization. Indeed, it is well known (see, for instance, \cite{gancedo2008SQG}) that the tangential component of the velocity does not affect the shape of the interface, but only its parametrization. Consequently, the evolution of the curve is determined up to the addition of an arbitrary tangential term.

Using a Biot-Savart type law, the velocity field can be expressed in terms of the boundary parametrization as
\[
v(\gamma,t) = \int_\mathbb{T} K(|\gamma(x,t)-\gamma(x-x',t)|)\,\partial_x \gamma(x-x',t)\,dx',
\]

Since only the normal component is relevant, we may exploit this freedom to rewrite the evolution equation in a more convenient form. Following \cite{gancedo2008SQG, gancedo2021local}, we consider the contour dynamics equation 
\begin{equation}\label{eq:CDEmod}\tag{CDE}
    \left\{
    \begin{aligned}
        &\partial_t \gamma(x,t) = -\int_\mathbb{T} K(|\gamma(x,t)-\gamma(x-x',t)|)\,(\partial_x \gamma(x,t)-\partial_x \gamma(x-x',t))\,dx'+ \lambda(x,t)\partial_x\gamma(x,t),\\
        &\gamma(x,0) = \gamma_0(x).
    \end{aligned}
    \right.
\end{equation}

where \(\lambda(x,t)\) is a scalar function chosen so that the parametrization satisfies a convenient condition (e.g., constant speed or a specific gauge).

To ensure that the curve remains well-behaved and does not self-intersect, we impose the arc-chord condition

\begin{equation}\label{eq:arcChord}
    \frac{|\gamma(x,t)-\gamma(x-x',t)|}{|x'|}\geq c >0,\quad \forall x,x'\in \mathbb{T},\ \forall t\geq 0,
\end{equation}

which rules out self-intersections and prevents the degeneration of the parametrization, in particular cusp-type collapses. Associated to this condition, one defines the quantity

\begin{equation}\label{eq:F}
    F(\gamma)(x,x',t)=\frac{|x'|}{|\gamma(x,t)-\gamma(x-x',t)|},
\end{equation}

with the convention \(F(\gamma)(x,0,t)=|\partial_x\gamma(x,t)|^{-1}\). We denote its supremum over the spatial variables by
\[
\Np{F(\gamma)}(t) \coloneqq \sup_{x,x'\in\mathbb{T}} F(\gamma)(x,x',t).
\]
The boundedness of \(F(\gamma)\), or equivalently the finiteness of \(\Np{F(\gamma)}\), will be a crucial control in the energy estimates.

Under these assumptions, we prove the following local well-posedness result.

\begin{theorem}[Local well-posedness in \(H^3\)]\label{thm:H3}
Let \(\gamma_0 \in H^3(\mathbb{T})\) be an initial curve such that \(F(\gamma_0)\) is bounded. Assume the radial kernel \(K\) satisfies hypotheses \ref{ass:smoothness}-\ref{ass:monotonicity}. Then there exists a time \(T > 0\) so that there is a unique solution to the contour dynamics equation \eqref{eq:CDEmod} in $C([0,T]; H^3(\T))$ with $\gamma(x,0) = \gamma_0(x)$ and $\lambda(x,t)=0$.
\end{theorem}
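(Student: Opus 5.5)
The plan follows the Gancedo strategy for gSQG patches, adapted to a general radial kernel. First I derive a priori energy estimates for the quantity
\[
E(t)=\Hs{\gamma}(t)^2+\Np{F(\gamma)}(t),
\]
aiming at a differential inequality $\frac{d}{dt}E\le C\,(1+E)^p$ for some $p$ depending only on $K$. This gives a uniform bound on a short time interval $[0,T]$. Existence then follows from a regularization scheme, and uniqueness from an $L^2$ estimate on the difference of two solutions.

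\textbf{Energy estimate for $\Hs{\gamma}$.} Set $\lambda=0$. Differentiate \eqref{eq:CDEmod} three times in $x$ and pair with $\partial_x^3\gamma$. The lower-order norms are straightforward: $\frac{d}{dt}\Lp{\gamma}^2$ is controlled using \ref{ass:integrability} and the polynomial growth in \ref{ass:compKernel}, since $|\gamma(x)-\gamma(x-x')|\le \Ck[1]{\gamma}\,|x'|$ and the arc-chord bound keeps this quantity comparable to $|x'|$.

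For the top order, write $\Delta\gamma=\gamma(x)-\gamma(x-x')$. Distributing the three derivatives gives terms of the form
\[
\partial_x^j\big[K(|\Delta\gamma|)\big]\,\partial_x^{4-j}\Delta\gamma .
\]
\begin{itemize}
\item \textbf{The $j=0$ term.} $\intTT K(|\Delta\gamma|)\,\partial_x^4\Delta\gamma\cdot\partial_x^3\gamma$ contains four derivatives. I will symmetrize it: change variables $x-x'\mapsto x$ and integrate by parts. This puts one derivative on the kernel and leaves $\frac12\intTT \partial_x[K(|\Delta\gamma|)]\,|\partial_x^3\Delta\gamma|^2$-type terms.
\item \textbf{Derivatives of the kernel.} $\partial_x K(|\Delta\gamma|)=K'(|\Delta\gamma|)\,\frac{\Delta\gamma\cdot\partial_x\Delta\gamma}{|\Delta\gamma|}$. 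Here I use $|\partial_x\Delta\gamma|\le \Ck[1]{\partial_x\gamma}\,|x'|$ together with the arc-chord bound $|\Delta\gamma|\ge |x'|/\Np{F(\gamma)}$. Monotonicity in \ref{ass:monotonicity} gives $|K'(|\Delta\gamma|)|\le|K'(|x'|/\Np{F})|$, so after a change of variables the integrand becomes $r|K'(r)|$. Since $|K|$ is decreasing near $0$ and integrable, $r|K'(r)|$ is integrable near $0$. Far from the origin, polynomial growth handles the contribution.
\item \textbf{Terms with $j=2,3$.} These involve $K''$ and $K'''$, paired with $(\partial_x\Delta\gamma)^2$ or $(\partial_x\Delta\gamma)^3$, and with $\partial_x^2\Delta\gamma$. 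The factors $|x'|^2$ and $|x'|^3$ come from $\Ck[2]{\gamma}\lesssim\Hs{\gamma}$ and compensate $K''\sim r^{-2}K$ and $K'''\sim r^{-3}K$.
\item \textbf{The dangerous piece.} The term $K'\cdot\frac{\Delta\gamma\cdot\partial_x^3\Delta\gamma}{|\Delta\gamma|}\,\partial_x\Delta\gamma$ contains $\partial_x^3\gamma(x-x')$ and only one power of $|x'|$ of gain. I plan to handle it by Cauchy--Schwarz in $x$ with the $L^1$-in-$x'$ kernel bound above.
\end{itemize}

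\textbf{Main obstacle.} The key difficulty is controlling $K',K'',K'''$ near the origin using only \ref{ass:smoothness}--\ref{ass:monotonicity}, without explicit formulas. I would first prove a lemma: monotonicity of $|K^{(n)}|$ plus integrability of $K$ imply $r^n|K^{(n)}(r)|\lesssim \int_{r/2}^{r}|K|\,/\,r$-type bounds, hence $\int_0^1 r^n|K^{(n)}(r)|\,dr<\infty$. This follows by repeatedly applying the mean value theorem on $[r/2,r]$ with decreasing derivatives. With this lemma, all terms are bounded by $C(\Np{F(\gamma)})\,(1+\Hs{\gamma})^p$.

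\textbf{Arc-chord control and closing the argument.} For the arc-chord quantity, differentiate $F(\gamma)^{-1}$ in time. Along the flow,
\[
\left|\partial_t\frac{|\Delta\gamma|}{|x'|}\right|\le \frac{|\Delta\partial_t\gamma|}{|x'|}\le \Ck[1]{\partial_t\gamma},
\]
and $\Ck[1]{\partial_t\gamma}$ is bounded by the same type of integrals. Hence
\[
\frac{d}{dt}\Np{F(\gamma)}\le \Np{F(\gamma)}^2\,C(E),
\]
which is rigorous via the $L^p$, $p\to\infty$, trick. Existence then follows by mollifying the equation (insert $\phi_\varepsilon*$ around the nonlinearity so it becomes an ODE in $H^3$), obtaining the uniform bounds, and passing to the limit. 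Time continuity in $H^3$ comes from the standard weak continuity plus norm continuity argument. For uniqueness, estimate $\Lp{\gamma-\tilde\gamma}$ using the mean value theorem on $K$. Both solutions are in $H^3$ and satisfy the arc-chord condition, which gives a Gronwall bound.
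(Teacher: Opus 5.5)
Your overall route is the paper's: symmetrize the top-order term, use monotonicity of $|K^{(n)}|$ together with the arc-chord bound, rely on integrability of $r^n|K^{(n)}|$ near $0$, control $F(\gamma)$ via $L^p\to L^\infty$, mollify, and prove uniqueness in $L^2$. Your pointwise kernel lemma, which iterates the mean value theorem to get $r^n|K^{(n)}(r)|\lesssim|K(2^{-n}r)|$ near $0$, is a valid substitute for the paper's integration by parts.

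However, your accounting of the top-order terms has a hole. You single out one dangerous piece and say that everywhere else the needed powers of $|x'|$ come from $\|\gamma\|_{C^2}$. That is false for the following $j=2$ term (the paper's $J_2$), with $\Delta f=f(x)-f(x-x')$:
\[
\int_{\T}\int_{\T}\partial_x^3\gamma(x)\cdot\Delta\partial_x^2\gamma\,K'(|\Delta\gamma|)\,\frac{\Delta\gamma\cdot\Delta\partial_x^2\gamma}{|\Delta\gamma|}\,dx'\,dx .
\]
Here $C^2$ bounds give no power of $|x'|$ at all. You are left with $\int_{\T}|K'(|x'|/\|F(\gamma)\|_{L^\infty})|\,dx'$, which already diverges for the Euler kernel, $K'(r)=-1/(2\pi r)$. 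You must use $H^3$ rather than $C^2$ information on $\Delta\partial_x^2\gamma$. One way, as the paper does, is to write one factor as $x'\int_0^1\partial_x^3\gamma(x+(s-1)x')\,ds$ and apply Cauchy--Schwarz in $x$. Another is to use $|\Delta\partial_x^2\gamma|\le|x'|^{1/2}\|\partial_x^3\gamma\|_{L^2}$ on both factors. Either way restores the integrable weight $|x'|\,|K'|$.

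The uniqueness sketch has the same kind of omission. Write $\eta=\gamma_1-\gamma_2$. The term $\int_{\T}\int_{\T}\eta(x)\cdot\Delta\partial_x\eta\,K(|\Delta\gamma_2|)\,dx'\,dx$ carries a derivative of the difference, and the mean value theorem on $K$ does not touch it. It has to be symmetrized and integrated by parts, exactly like your $j=0$ term, so that it becomes $-\frac{1}{4}\int_{\T}\int_{\T}|\Delta\eta|^2\,\partial_x[K(|\Delta\gamma_2|)]\,dx'\,dx$.

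Two smaller points:
\begin{enumerate}
\item The range $\nu\ge1$ in $\int_0^{\pi/\|F(\gamma)\|_{L^\infty}}\nu^j|K^{(n)}(\nu)|\,d\nu$ is controlled by the global monotonicity in (A4) (Lemma~\ref{lemma:canviVar}). The polynomial growth in (A3) concerns $K$ alone and says nothing about its derivatives.
\item The mollifier must also act on the $\partial_x\gamma$ factor inside the integral, as in \eqref{eq:patchRegular}. Otherwise the symmetric structure of the top-order term is lost and the energy estimates are not uniform in $\varepsilon$.
\end{enumerate}
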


If the integrability condition \ref{ass:integrability} is strengthened, the required regularity can be lowered, although uniqueness at this level of regularity is currently unknown.

\begin{theorem}[Local existence in \(H^2\)]\label{thm:H2}
Let \(\gamma_0 \in H^2(\mathbb{T})\) be an initial curve such that \(F(\gamma_0)\) is bounded. Assume the radial kernel \(K\) satisfies \ref{ass:smoothness}, \ref{ass:compKernel} and \ref{ass:monotonicity} (only required for $n=1,2$) and the following stronger integrability condition:
\begin{equation}\label{ass:strong-integrability}\tag{A2'}
\int_0^1 r| K(r)|^{\frac{1}{\alpha}} \, dr < \infty \quad \text{for some } \alpha \in \left(0,\frac{1}{2}\right).
\end{equation}
Then there exists a time \(T > 0\) and a solution \(\gamma \in C([0,T]; H^2(\T))\) to the contour dynamics equation \eqref{eq:CDEmod} with $\lambda$ as defined in \eqref{eq:lambda}.
\end{theorem}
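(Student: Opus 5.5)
We prove Theorem~\ref{thm:H2} by following the strategy of Gancedo--Patel \cite{gancedo2021local}: an a priori energy estimate for a quantity that couples the $H^2$ norm of $\gamma$ with the arc-chord quantity, followed by a regularization and compactness argument. The tangential term $\lambda$ in \eqref{eq:lambda} is chosen so that $|\partial_x\gamma(x,t)|^2$ depends only on $t$, i.e.\ the curve is parametrized proportionally to arclength,
\[
|\partial_x\gamma(x,t)|^2 = A(t)=\frac{L(t)^2}{4\pi^2}.
\]
This gives the identities $\partial_x\gamma\cdot\partial_x^2\gamma=0$ and $\partial_x\gamma\cdot\partial_x^3\gamma=-|\partial_x^2\gamma|^2$. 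In the energy estimate these identities absorb the term that would otherwise lose a derivative. We also use that $\lambda$ is given by an explicit integral, so $\partial_x\lambda$ can be estimated in terms of $\partial_x^2\gamma$ and the velocity.

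\emph{Step 1 (a priori estimate).} Set
\[
E(t)=\Hd{\gamma}^2+\Np{F(\gamma)}(t).
\]
We aim to show $\frac{d}{dt}E\le C\,(1+E)^p$ for some $p>1$.

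\emph{The $H^2$ part.} Differentiate \eqref{eq:CDEmod} twice and test against $\partial_x^2\gamma$. The top-order terms come in two groups.
\begin{itemize}
\item The terms $K(|\Delta\gamma|)\,(\partial_x^3\gamma(x)-\partial_x^3\gamma(x-x'))$ are handled by symmetrizing, as in the commutator/integration-by-parts trick for Euler patches. Half of this term is a total derivative, and the remainder involves $\partial_x\big[K(|\gamma(x)-\gamma(x-x')|)\big]$.
\item Terms in which the derivatives fall on $K$ produce $K'(|\Delta\gamma|)\,\Delta\gamma\cdot\Delta\partial_x^2\gamma/|\Delta\gamma|$ and similar expressions.
\end{itemize}
For $H^2$ curves, $\partial_x\gamma\in C^{1/2}$, so $|\Delta\partial_x\gamma|\lesssim |x'|^{1/2}\Hd{\gamma}$. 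Using \ref{ass:monotonicity} with $n=1,2$ and the arc-chord bound $|\Delta\gamma|\ge |x'|/\Np{F(\gamma)}$, we reduce the kernel factors to comparable expressions in $|x'|$. We then use Hölder's inequality in $x'$: the $L^2$ norm of $\partial_x^2\gamma$ is paired with $|K(|\Delta\gamma|)|$ raised to a power. This is exactly where \eqref{ass:strong-integrability} enters. The exponent $1/\alpha>2$ with the weight $r$ is what is needed to control the $x'$-integrals of $K$ against the $L^2$ differences $\|\Delta\partial_x^2\gamma\|_{L^2}$, via a Hardy/Sobolev-type bound $\int |x'|^{-1-2s}\|\Delta\partial_x^{2}\gamma\|^2\lesssim$\ldots. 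The growth assumption \ref{ass:compKernel} handles the region $|x'|$ away from $0$.

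\emph{The arc-chord part.} For $\Np{F(\gamma)}$, compute $\frac{d}{dt}F$ along the sup. Then
\[
|\partial_t\Delta\gamma|\le |x'|\,\|\partial_x\partial_t\gamma\|_{L^\infty},
\]
so it suffices to bound $\partial_t\gamma$ in $C^1$ by a power of $E$. This follows from Sobolev embedding $H^2\subset C^{1,1/2}$, \ref{ass:integrability}, and the control of $\lambda$.

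\emph{Step 2 (approximation and passage to the limit).} Regularize: either mollify the data and use Theorem~\ref{thm:H3}, or mollify the equation directly. The regularized problems give smooth solutions on a uniform time interval $[0,T]$, where $T$ is determined by the ODE inequality in Step 1. By Aubin--Lions, the regularized solutions converge strongly in $C([0,T];H^{2-\epsilon})$ and weakly-$*$ in $L^\infty H^2$. The nonlinear terms pass to the limit because the arc-chord bound is uniform. Continuity in time with values in $H^2$ (not just weak continuity) follows by the standard argument that the energy equation gives continuity of the norm.

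\emph{Main obstacle.} The main difficulty is the top-order term in Step 1 where $\partial_x^2$ falls on $K(|\Delta\gamma|)$, giving $K''$-type kernels multiplied by $|\Delta\partial_x\gamma|^2$ and $K'$ times $\Delta\partial_x^2\gamma$. For a general $K$ we lack homogeneity, so these must be bounded purely through monotonicity, by comparing $|K'(r)|\lesssim |K(r/2)|/r$. That comparison itself is derived from the monotonicity of $|K'|$ via a mean-value argument. I would try this comparison first, together with Hölder's inequality with exponents $(1/\alpha,\,1/(1-\alpha))$ in $x'$.
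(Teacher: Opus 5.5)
The overall architecture matches the paper (gauge $|\partial_x\gamma|^2=A(t)$, coupled $H^2$/arc-chord estimate, regularization), but the mechanisms you propose for the two key estimates do not work at $H^2$ regularity. Write $\Delta f=f(x)-f(x-x')$.

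\textbf{The $H^2$ energy estimate.} The role you give to \eqref{ass:strong-integrability} here is not available. The quantity $\int_\T|x'|^{-1-2s}\|\Delta\partial_x^2\gamma\|_{L^2}^2\,dx'$ is the $\dot H^{2+s}$ seminorm, which $E$ does not control. Consider the top-order term
\[
-\tfrac14\int_\T\int_\T|\Delta\partial_x^2\gamma|^2\,\frac{\Delta\gamma\cdot\Delta\partial_x\gamma}{|\Delta\gamma|}\,K'(|\Delta\gamma|)\,dx'\,dx .
\]
All you know is $\|\Delta\partial_x^2\gamma\|_{L^2_x}\le 2\|\partial_x^2\gamma\|_{L^2}$ uniformly in $x'$, so the $x'$-weight must be integrable on its own, and no H\"older argument in $x'$ helps. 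With only $|\Delta\partial_x\gamma|\lesssim|x'|^{1/2}$ you would need $\int_0^1 r^{1/2}|K'(r)|\,dr<\infty$. This fails for gSQG with $\beta\in[\frac12,1)$, although \eqref{ass:strong-integrability} holds there. The same problem arises for the $K'\,\Delta\gamma\cdot\Delta\partial_x^2\gamma$ term you call the main obstacle, and trading $K'(r)$ for $K(r/2)/r$ does not change it.

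The missing idea is to use the gauge inside these kernel terms:
\begin{itemize}
\item the identity $\partial_x\gamma(x)\cdot\Delta\partial_x\gamma=\tfrac12|\Delta\partial_x\gamma|^2$ gives $|\Delta\gamma\cdot\Delta\partial_x\gamma|\le\tfrac32|x'|^2|\partial_x\gamma|_{C^{1/2}}^2$;
\item one also has $x'\partial_x\gamma(x)\cdot\Delta\partial_x^2\gamma=-x'\,\Delta\partial_x\gamma\cdot\partial_x^2\gamma(x-x')$.
\end{itemize}
With these, every commutator term reduces to $\int_0^1 r|K'|$ and $\int_0^1 r^2|K''|$. Both are finite by Lemma~\ref{lemma:intDerivades}, since \eqref{ass:strong-integrability} implies \ref{ass:integrability}.

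Where \eqref{ass:strong-integrability} is really needed is the pointwise bound on $\partial_x\lambda$, which your sketch passes over. That bound is required for the term $\tfrac32\int_\T|\partial_x^2\gamma|^2\partial_x\lambda\,dx$. The function $\partial_x\lambda$ contains $\frac{\partial_x\gamma(x)}{A(t)}\cdot\int_\T\Delta\partial_x^2\gamma\,K(|\Delta\gamma|)\,dx'$, in which $\partial_x^2\gamma(x)$ appears pointwise. The gauge kills it and leaves $\int_\T\Delta\partial_x\gamma\cdot\partial_x^2\gamma(x-x')\,K(|\Delta\gamma|)\,dx'$. You then bound this as follows:
\begin{itemize}
\item use $|\Delta\partial_x\gamma|\le|x'|^\alpha|\partial_x\gamma|_{C^\alpha}$;
\item apply H\"older in $x'$ with exponents $\frac1\alpha$ and $\frac1{1-\alpha}$;
\item the first factor involves $|x'|\,|K|^{1/\alpha}$, which is integrable near the origin by \eqref{ass:strong-integrability} after the arc-chord comparison;
\item the second factor is $\|\partial_x^2\gamma\|_{L^{1/(1-\alpha)}}\lesssim\|\partial_x^2\gamma\|_{L^2}$, because $\frac1{1-\alpha}<2$.
\end{itemize}

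\textbf{The arc-chord step.} This fails as stated, because $\partial_t\gamma$ is not in $C^1$ for $H^2$ curves. The derivative $\partial_x\partial_t\gamma$ contains $\bigl(\lambda(x)-\int_\T K(|\Delta\gamma|)\,dx'\bigr)\partial_x^2\gamma(x)$, which is in $L^2$ but in general not in $L^\infty$. So $\sup|\Delta\partial_t\gamma|/|x'|$ is not bounded by any power of $E$.

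Since $\partial_tF=-|x'|\,\Delta\gamma\cdot\Delta\partial_t\gamma/|\Delta\gamma|^3$, what you actually need is $|\Delta\gamma\cdot\Delta\partial_t\gamma|\lesssim|x'|^2$. This again comes from the gauge, now in the form $\partial_x\gamma\cdot\partial_x\partial_t\gamma=\tfrac12A'(t)$. Write $\Delta\gamma=x'\partial_x\gamma(x)+R$ with $|R|\le|x'|^{3/2}|\partial_x\gamma|_{C^{1/2}}$, and use $|\Delta\partial_t\gamma|\le|x'|^{1/2}\|\partial_x\partial_t\gamma\|_{L^2}$ together with
\[
\partial_x\gamma(x)\cdot\Delta\partial_t\gamma=\tfrac12A'(t)\,x'+\int_{x-x'}^{x}\bigl(\partial_x\gamma(x)-\partial_y\gamma(y)\bigr)\cdot\partial_y\partial_t\gamma(y)\,dy .
\]
This gives $|\Delta\gamma\cdot\Delta\partial_t\gamma|\lesssim|x'|^2\bigl(|A'(t)|+|\partial_x\gamma|_{C^{1/2}}\|\partial_x\partial_t\gamma\|_{L^2}\bigr)$. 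So an $H^1$ bound on $\partial_t\gamma$, not a $C^1$ bound, suffices. That bound needs $\|\lambda\|_{L^\infty}$ and $\|\partial_x\lambda\|_{L^2}$, obtained as above. The paper instead splits $\partial_tF$ into $B_1,\dots,B_5$ and uses \eqref{ass:strong-integrability} again in $B_{22}$.

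\textbf{Regularization.} Going through Theorem~\ref{thm:H3} is not directly available, for two reasons.
\begin{itemize}
\item It assumes \ref{ass:monotonicity} for $n=3$, which Theorem~\ref{thm:H2} does not.
\item Its solutions have $\lambda=0$, so they violate the gauge your a priori estimate relies on. You would have to reparametrize and also supply a continuation criterion in terms of $\|\gamma\|_{H^2}+\|F(\gamma)\|_{L^\infty}$.
\end{itemize}
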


\begin{remark} \label{rmk:uniquenessH2}
For the specific family of gSQG kernels with $0<\beta<\frac{1}{3}$, Gancedo and Patel \cite{gancedo2021local} proved uniqueness of $H^2$
patches in the half‑plane setting. Their proof crucially exploits an extra cancellation coming from the reflected (mirror) terms that appear due to the fixed boundary condition. In the whole space, the same cancellation is absent, and uniqueness of $H^2$ patches for the gSQG equation (or for the general kernels covered by Theorem~\ref{thm:H2}) remains open to the best of our knowledge. 
\end{remark}

\begin{remark}
The assumptions on the kernel have been stated in a form that keeps the energy
estimates as transparent as possible. The smoothness assumption~\ref{ass:smoothness}
can be relaxed: it suffices that \(K\in C^3(\mathbb R\setminus\{0\})\) for
Theorem~\ref{thm:H3}, and \(K\in C^2(\mathbb R\setminus\{0\})\) for
Theorem~\ref{thm:H2}.

In a similar way, the global monotonicity assumption in~\ref{ass:monotonicity} is used mainly
to avoid repeated decompositions between small and large scales. One expects the same
arguments to work under a weaker condition in which, for the relevant derivatives,
\(|K^{(n)}|\) is required to be decreasing only near the origin, while away from the
origin the derivatives satisfy polynomial growth bounds analogous to
\ref{ass:compKernel}. Indeed, the singular part of the estimates only uses monotonicity
at small scales, whereas the contribution from the region
\(|\gamma(x)-\gamma(x-x')|\gtrsim 1\) can be controlled by Sobolev bounds on the curve
and polynomial growth of the kernel. We keep the stronger formulation in order to avoid
additional technical decompositions.
\end{remark}

The analytical framework developed in this paper follows closely the approach introduced by Gancedo in \cite{gancedo2008SQG} and further developed in \cite{gancedo2021local}. The argument is based on the contour-dynamics energy method introduced in the aforementioned works, but it is implemented here at the level of a general radial kernel \(K\). This requires isolating the precise kernel properties needed in the estimates and replacing the explicit computations available for the gSQG kernels by structural bounds valid for the whole class considered.

This work is organized as follows: Section~\ref{sec:preliminariesUnified} introduces the function spaces and notation used throughout the paper. Section~\ref{sec:H3} is devoted to the proof of Theorem~\ref{thm:H3}, and Section~\ref{sec:H2} to the proof of Theorem~\ref{thm:H2}. Finally, Appendix \ref{app:examples} discusses further examples of admissible kernels.

\section{Preliminaries}  \label{sec:preliminariesUnified}
In this section we introduce the function spaces and notation that will be used throughout the paper. We also recall the assumptions on the kernel and prove two key estimates that are fundamental for the energy arguments in the following sections.

\subsection{Function spaces and notation}
Let \(\mathbb{T}=[-\pi,\pi]\) be the one‑dimensional torus. Functions \(\gamma:\mathbb{T}\to\mathbb{R}^2\) are always assumed to be \(2\pi\)-periodic. For such functions we will use the standard Lebesgue and Sobolev spaces.

For an integer \(k\ge 0\), we define the Sobolev space \(H^k(\mathbb{T})\) as the set of functions whose derivatives up to order \(k\) belong to \(L^2(\mathbb{T})\). A standard norm is
$
\|\gamma\|_{H^k}^2 = \sum_{j=0}^k \|\partial_x^j\gamma\|_{L^2}^2.
$ However, for our energy estimates it will be convenient to use an equivalent norm that only involves the \(L^2\) norm of the function and its highest derivative. More precisely, on the torus we have the equivalence
\[
\|\gamma\|_{H^k(\T)}^2 \sim \Lp{\gamma}^2 + \Lp{\partial_x^k\gamma}^2,
\]
which follows from the Poincaré inequality: on the torus, any function can be decomposed into its constant part (controlled by \Lp{f}) and a zero-mean part whose lower-order derivatives are bounded by \Lp{\partial_x^k f}. This equivalence allows us to focus our estimates on the highest-order derivative, since the lower-order terms can be controlled by interpolation or by the conservation laws.

For the geometric estimates we will also need to control lower-order derivatives in the supremum norm. We define the \(C^2\) norm as
\[
\Ck[2]{\gamma} = \sup_{x\in\mathbb{T}}|\gamma(x)| + \sup_{x\in\mathbb{T}}|\partial_x\gamma(x)| + \sup_{x\in\mathbb{T}}|\partial_x^2\gamma(x)|.
\]
By the Sobolev embedding \(H^3(\mathbb{T})\hookrightarrow C^2(\mathbb{T})\) (in one dimension this embedding is continuous), there exists an absolute constant \(C>0\) such that
\[
\Ck[2]{\gamma} \le C \Hs{\gamma} \quad \text{for all } \gamma\in H^3(\mathbb{T}).
\]

In the lower regularity setting of Theorem 1.2 we will need Hölder spaces. For \(\sigma\in(0,1)\), define \(\Lipa[\sigma]\) as the space of functions \(f:\mathbb{T}\to\mathbb{R}^2\) such that
\[
\Na[\sigma]{f} := \sup_{x\in\mathbb{T}}|f(x)| + \sup_{x\neq y\in\mathbb{T}}\frac{|f(x)-f(y)|}{|x-y|^\sigma} < \infty.
\]

By Sobolev embedding, we have the continuous inclusion 
$H^2(\mathbb{T})\hookrightarrow \mathcal{C}^{1,\frac{1}{2}}.$ Here \(C^{1,\frac12}\) denotes the space of bounded functions whose first derivative
belongs to \(C^{\frac12}\). More generally, \(H^2(\mathbb T)\hookrightarrow
C^{1,\sigma}(\mathbb T)\) for every \(0<\sigma\le \frac12\). These embeddings will allow us to control the quantity \(F(\gamma)\) defined below, which involves differences of \(\gamma\) at nearby points.

To simplify notation, throughout the paper we will occasionally use the shorthand $$\diff{f} = f(x)-f(x-x').$$

\subsection{Kernel estimates}
We consider a radial kernel $K$, satisfying the conditions \ref{ass:smoothness}–\ref{ass:monotonicity} stated in the introduction. 

\begin{lemma} \label{lemma:intDerivades}
    If $K$ satisfies \ref{ass:smoothness}--\ref{ass:monotonicity}, then
    \begin{enumerate}
        \item $\displaystyle \lim_{r\to 0} r^{n+1}K^{(n)}(r)=0$ for $n=0,1,2$, \label{prop:Limits0}
        \item $\displaystyle \int_0^1 r^n\,|K^{(n)}(r)|\,dr < \infty$ for $n=1,2,3$. \label{prop:integrabilitatrK}
    \end{enumerate}
\end{lemma}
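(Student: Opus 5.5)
The plan is an induction on $n$ that alternates between the two parts of the statement. Two elementary facts drive it.

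\emph{Fact 1 (monotone and integrable).} If $g\ge 0$ is decreasing on $(0,\delta]$ and $\int_0^\delta s^{m}g(s)\,ds<\infty$ for some $m\ge 0$, then $r^{m+1}g(r)\to 0$ as $r\to 0$. Indeed, for $r\le\delta$, monotonicity gives
\[
\int_{r/2}^{r} s^m g(s)\,ds \;\ge\; \frac r2\Big(\frac r2\Big)^m g(r),
\]
and the left side tends to $0$ because it is a piece of the convergent integral near the origin.

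\emph{Fact 2 (constant sign near $0$).} Fix $n\in\{1,2,3\}$. By \ref{ass:monotonicity}, $|K^{(n)}|$ is decreasing on $(0,\infty)$, and by \ref{ass:smoothness}, $K^{(n)}$ is continuous. If $K^{(n)}(r_0)=0$, monotonicity forces $K^{(n)}\equiv 0$ on $[r_0,\infty)$. So either $K^{(n)}$ vanishes identically near $0$, which is trivial, or it has no zeros on some interval $(0,r_0)$. In the latter case it has a constant sign there by continuity, and then $|K^{(n)}|=\pm K^{(n)}$ on that interval.

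\textbf{Base case, item (1) with $n=0$.} By \ref{ass:compKernel}, $|K|$ is decreasing on $(0,\delta]$, and by \ref{ass:integrability} it is integrable there. Fact 1 with $m=0$ gives $r|K(r)|\to 0$.

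\textbf{Inductive step.} Assume we know
\[
\lim_{r\to 0} r^{n}K^{(n-1)}(r)=0 \quad\text{and}\quad \int_0^1 r^{n-1}|K^{(n-1)}(r)|\,dr<\infty .
\]
For $n=1$ the integral condition is just \ref{ass:integrability}.

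\emph{Item (2) for $n$.} Using the constant sign from Fact 2, for small $\varepsilon$ we integrate by parts:
\[
\int_\varepsilon^{r_0} r^{n}|K^{(n)}(r)|\,dr=\pm\Big(\big[r^{n}K^{(n-1)}(r)\big]_\varepsilon^{r_0}-n\int_\varepsilon^{r_0} r^{n-1}K^{(n-1)}(r)\,dr\Big).
\]
This stays bounded as $\varepsilon\to 0$. The boundary term at $\varepsilon$ vanishes by item (1) for $n-1$, and the remaining integral converges by item (2) for $n-1$. On $[r_0,1]$ the integrand is continuous, so it is harmless. This yields $\int_0^1 r^n|K^{(n)}|<\infty$.

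\emph{Item (1) for $n\le 2$.} Apply Fact 1 to $g=|K^{(n)}|$, which is decreasing by \ref{ass:monotonicity}, with $m=n$. This gives $r^{n+1}K^{(n)}(r)\to 0$.

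Running the induction for $n=1,2,3$ produces every claimed statement. For $n=3$ we only need item (2), so the step uses the limit for $n=2$ but does not require a new limit.

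\textbf{Main obstacle.} The only delicate point is justifying $|K^{(n)}|=\pm K^{(n)}$ near the origin, so that integrating by parts controls the absolute value and not just a signed integral. Fact 2 handles this by combining global monotonicity of $|K^{(n)}|$ with continuity. Everything else is routine bookkeeping of the boundary terms.
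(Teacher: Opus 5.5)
Your proof is correct and follows essentially the same route as the paper: the dyadic bound $\int_{r/2}^{r} s^m|K^{(m)}(s)|\,ds \ge (r/2)^{m+1}|K^{(m)}(r)|$ gives the limits, and integration by parts, using the fixed sign of $K^{(n)}$ near the origin, bootstraps the weighted integrability from order $n-1$ to order $n$. Your Fact 2 also gives an explicit justification of the fixed-sign property, which the paper only asserts.
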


\begin{proof}
It is enough to prove the estimates near the origin, since the contribution away from
the origin is finite by the smoothness of \(K\). Hence, without loss of generality, we
assume that the monotonicity interval in \ref{ass:compKernel} is \((0,1]\).

Since \(|K|\) is monotone near the origin by \ref{ass:compKernel}, and
\(|K^{(m)}|\) is monotone for \(m=1,2,3\) by \ref{ass:monotonicity}, each \(K^{(m)}\)
has a fixed sign near the origin, up to the trivial case where it vanishes identically
on a subinterval. We shall use the analogous
observation for \(K'\), \(K''\), and \(K'''\) without further comment.

For any \(r\in(0,1]\), the monotonicity of \(|K|\) on \((0,1]\) gives
\[
    \int_0^r |K(t)|\,dt
    \ge
    \int_{r/2}^r |K(t)|\,dt
    \ge
    \frac r2 |K(r)|
    \ge 0 .
\]
Since \(\int_0^r |K(t)|\,dt\to0\) as \(r\to0\), we obtain
\[
    \lim_{r\to0} rK(r)=0,
\]
which proves \ref{prop:Limits0} for \(n=0\).

To treat \(K'\), integrate by parts on \([\varepsilon,1]\):
\[
    \int_\varepsilon^1 tK'(t)\,dt
    =
    tK(t)\Big|_\varepsilon^1
    -
    \int_\varepsilon^1 K(t)\,dt .
\]
Since \(\varepsilon K(\varepsilon)\to0\) and \(\int_0^1 K(t)\,dt\) converges, passing
\(\varepsilon\to0\) shows that \(\int_0^1 tK'(t)\,dt\) converges. As \(K'\) has fixed
sign near the origin, this convergence is absolute near the origin; hence
\[
    \int_0^1 t|K'(t)|\,dt<\infty.
\]
Using again the monotonicity of \(|K'|\),
\[
    \int_0^r t|K'(t)|\,dt
    \ge
    \int_{r/2}^r t|K'(t)|\,dt
    \ge
    \frac r2\int_{r/2}^r |K'(t)|\,dt
    \ge
    \frac{r^2}{4}|K'(r)|
    \ge 0 .
\]
The left-hand side tends to zero as \(r\to0\), and therefore
\[
    \lim_{r\to0} r^2K'(r)=0.
\]
This proves \ref{prop:Limits0} for \(n=1\), and also gives
\[
    \int_0^1 r|K'(r)|\,dr<\infty.
\]

The same argument, applied successively to \(K'\) and \(K''\), gives
\[
    \int_0^1 r^2|K''(r)|\,dr<\infty,
    \qquad
    \lim_{r\to0} r^3K''(r)=0,
\]
and
\[
    \int_0^1 r^3|K'''(r)|\,dr<\infty.
\]
Indeed, the integrations by parts are
\[
    \int_\varepsilon^1 t^2K''(t)\,dt
    =
    t^2K'(t)\Big|_\varepsilon^1
    -
    2\int_\varepsilon^1 tK'(t)\,dt,
\]
and
\[
    \int_\varepsilon^1 t^3K'''(t)\,dt
    =
    t^3K''(t)\Big|_\varepsilon^1
    -
    3\int_\varepsilon^1 t^2K''(t)\,dt.
\]
The boundary terms vanish because of the limits already proved, and the fixed-sign
property near the origin turns convergence into absolute convergence. The pointwise
limit for \(K''\) follows, as above, from
\[
    \int_0^r t^2|K''(t)|\,dt
    \ge
    \int_{r/2}^r t^2|K''(t)|\,dt
    \ge
    \frac{r^3}{8}|K''(r)|.
\]
This completes the proof.
\end{proof}

The following integral structure will appear repeatedly in what follows. For this reason we introduce a general bound, stated in the next lemma.

\begin{lemma} \label{lemma:canviVar}
     Let $j\geq n$ with $n=1,2,3$. If $K$ satisfies \ref{ass:smoothness}-\ref{ass:monotonicity} then for all $S>0$ we have 
     \begin{equation*}
         \int_0^S \nu^j |K^{(n)}(\nu)|d\nu \leq \int_0^1 \nu^j |K^{(n)}(\nu)| d\nu + \frac{|K^{(n)}(1)|}{j+1}(S^{j+1}-1)_+ \leq C_1 + C_2(S^{j+1}-1)_+,
     \end{equation*}
     where $(x)_+=\max\{x,0\},$ and the constants $C_1, C_2$ depend on $K$, $n$ and $j$.
\end{lemma}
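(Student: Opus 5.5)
Split the integral at \(\nu=1\) and treat the two pieces separately. If \(S\le 1\), the integrand is nonnegative, so \(\int_0^S \nu^j|K^{(n)}(\nu)|\,d\nu\le \int_0^1 \nu^j|K^{(n)}(\nu)|\,d\nu\). The term \((S^{j+1}-1)_+\) vanishes, and the claim reduces to finiteness of this integral. If \(S>1\), write
\[
\int_0^S \nu^j|K^{(n)}(\nu)|\,d\nu=\int_0^1 \nu^j|K^{(n)}(\nu)|\,d\nu+\int_1^S \nu^j|K^{(n)}(\nu)|\,d\nu ,
\]
and bound the two pieces by \(C_1\) and \(C_2(S^{j+1}-1)\) respectively.

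For the piece over \([0,1]\), use the hypothesis \(j\ge n\). On \((0,1]\) we have \(\nu^j\le \nu^n\), so
\[
\int_0^1 \nu^j|K^{(n)}(\nu)|\,d\nu\le\int_0^1 \nu^n|K^{(n)}(\nu)|\,d\nu .
\]
The right-hand side is finite by Lemma~\ref{lemma:intDerivades}\eqref{prop:integrabilitatrK}, applied with \(n=1,2,3\). This gives \(C_1:=\int_0^1 \nu^j|K^{(n)}(\nu)|\,d\nu<\infty\), which depends only on \(K\), \(n\) and \(j\).

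For the piece over \([1,S]\), use the global monotonicity assumption \ref{ass:monotonicity}. Since \(|K^{(n)}|\) is decreasing on all of \((0,\infty)\), we have \(|K^{(n)}(\nu)|\le |K^{(n)}(1)|\) for \(\nu\ge1\). Hence
\[
\int_1^S \nu^j|K^{(n)}(\nu)|\,d\nu\le |K^{(n)}(1)|\int_1^S\nu^j\,d\nu=\frac{|K^{(n)}(1)|}{j+1}\,(S^{j+1}-1).
\]
This is exactly the middle expression of the statement, so we may take \(C_2=|K^{(n)}(1)|/(j+1)\). The value \(K^{(n)}(1)\) is finite by the smoothness assumption \ref{ass:smoothness}. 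Combining the two cases yields both inequalities.

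There is no real obstacle here. The only points needing care are where each hypothesis enters: \(j\ge n\) is what lets Lemma~\ref{lemma:intDerivades} control the integral near the origin, and monotonicity holding on the whole half-line, not just near \(0\), is what makes the tail bound free of any growth assumption on \(K^{(n)}\).
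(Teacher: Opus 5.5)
Your proof is correct and follows the paper's argument. You split at $\nu=1$ and bound the tail by $|K^{(n)}(1)|\int_1^S\nu^j\,d\nu$ using the global monotonicity in \ref{ass:monotonicity}. You then control the piece on $[0,1]$ via Lemma~\ref{lemma:intDerivades}, and your remark that $\nu^j\le\nu^n$ on $(0,1]$ for $j\ge n$ makes explicit a step the paper leaves implicit.
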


\begin{proof}
    If $S\leq 1$ then $(S^{j+1}-1)_+=0$ so the result is clear. For $S\geq 1$ we split the integral into two parts and use the monotonicity assumption \ref{ass:monotonicity} to get
    \begin{align*}
        \int_0^S \nu^j |K^{(n)}(\nu)|d\nu &\leq \int_0^1 \nu^j |K^{(n)}(\nu)| d\nu + |K^{(n)}(1)| \int_1^S \nu^j  d\nu \\
        & \leq \int_0^1 \nu^j |K^{(n)}(\nu)| d\nu + \frac{|K^{(n)}(1)|}{j+1}(S^{j+1}-1).
    \end{align*}
    Finally, by Lemma \ref{lemma:intDerivades} we have that the first term is bounded.
\end{proof}

We shall also use that, since \(|K^{(n)}|\) is nonincreasing on \((0,\infty)\) for
\(n=1,2,3\), each \(K^{(n)}\) has a fixed sign on \((0,\infty)\), up to the harmless
case where it vanishes identically on a tail interval. Hence, for \(j\ge n\),
\[
\left|\int_0^S \nu^j K^{(n)}(\nu)\,d\nu\right|
=
\int_0^S \nu^j |K^{(n)}(\nu)|\,d\nu .
\]

\section{\texorpdfstring{Local well-posedness in $H^3$}{Local well-posedness in H3}}\label{sec:H3}
In this section we prove Theorem \ref{thm:H3}. As discussed in the introduction, the tangential component of the velocity does not affect the shape of the domain, only its parametrization. Exploiting this gauge freedom, we may take \(\lambda = 0\) in the contour dynamics equation \eqref{eq:CDEmod}, which fixes the contour patch equations as follows
\begin{equation}\label{eq:patch}
    \left\{\begin{aligned}
        \partial_t \gamma(x,t) &= -\int_\T K(|\gamma(x)-\gamma(x-x')|)(\partial_x \gamma(x)-\partial_x \gamma(x-x'))dx',\\ 
        \gamma(x,0)&= \gamma_0(x),
    \end{aligned}\right.
\end{equation}

where $\gamma_0\in H^3(\T)$ with $F(\gamma_0)(x,x')<\infty.$ We look at the time evolution of the $H^3$ norm $$\Hs{\gamma}^2=\Lp{\gamma}^2+\Lp{\partial_x^3\gamma }^2.$$

First, define $\Tilde{K}:[0,\infty)\to \R$ by $\Tilde{K}(u)=\int_0^{u} K(\sqrt{t})dt$ which is well-defined for $u\geq 0$ because the change of variables $t=r^2$ gives $\int_0^{u} K(\sqrt{t})dt=2\int_0^{\sqrt{u}} rK(r)dr$ and \ref{ass:integrability} guarantees convergence near zero. Then, by the Fundamental Theorem of Calculus, $\Tilde{K}'(u)=K(\sqrt{u})$ for $u>0.$ In particular, $K(r)=\Tilde{K}'(r^2)$ . Using the symmetry $x\leftrightarrow x-x'$ (which leaves the torus invariant) we obtain
\begin{equation}
\begin{aligned}
    -\int_\T \gamma(x) \cdot \gamma_t(x)dx &= \intTT \gamma(x) \cdot (\partial_x\gamma(x)-\partial_x\gamma(x-x'))K(|\diff{\gamma}|)dx'dx \\
    &= -\int_\T\int_\T \gamma(x-x') \cdot (\partial_x\gamma(x)-\partial_x\gamma(x-x'))K(|\diff{\gamma}|)dx'dx \\
    &= \frac{1}{2}\int_\T\int_\T \diff{\gamma}\cdot(\partial_x\gamma(x)-\partial_x\gamma(x-x'))K(|\diff{\gamma}|)dx'dx \\
    &= \frac{1}{4} \int_\T\int_\T \partial_x [|\gamma(x)-\gamma(x-x')|^2]K(|\gamma(x)-\gamma(x-x')|)dx'dx \\
    &= \frac{1}{4} \int_\T\int_\T \partial_x \Tilde{K}(|\gamma(x)-\gamma(x-x')|^2) dx'dx = 0.
\end{aligned}\label{eq:gammaL2}
\end{equation}

The final equality follows from Fubini's theorem and the fact that $\gamma$ is a closed curve, so the integral of a total derivative over $\mathbb{T}$ vanishes. Therefore $\partial_t\Lp{\gamma}(t)=0.$ 

We control now the quantity $$-\int_\T \partial_x^3\gamma(x)\cdot \partial_x^3\gamma_t(x)dx= I_1+ I_2+ I_3+ I_4,$$ where 
\begin{align*}
    I_1&=\int_\T\int_\T\partial_x^3\gamma(x)\cdot(\partial_x^4\gamma(x)-\partial_x^4\gamma(x-x'))K(|\gamma(x)-\gamma(x-x')|)dx'dx,\\ 
    I_2&=3\int_\T\int_\T\partial_x^3\gamma(x)\cdot(\partial_x^3\gamma(x)-\partial_x^3\gamma(x-x'))\partial_x[K(|\gamma(x)-\gamma(x-x')|)]dx'dx,\\
    I_3&=3\int_\T\int_\T\partial_x^3\gamma(x)\cdot(\partial_x^2\gamma(x)-\partial_x^2\gamma(x-x'))\partial_x^2[K(|\gamma(x)-\gamma(x-x')|)]dx'dx,\\
    I_4&=\int_\T\int_\T\partial_x^3\gamma(x)\cdot(\partial_x\gamma(x)-\partial_x\gamma(x-x'))\partial_x^3[K(|\gamma(x)-\gamma(x-x')|)]dx'dx.
\end{align*}

For $I_1$ we proceed as in \eqref{eq:gammaL2} to obtain
\begin{align*}
    I_1&=\frac{1}{2}\int_\T\int_\T (\partial_x^3\gamma(x)-\partial_ x^3\gamma(x-x')) \cdot (\partial_x^4\gamma(x)-\partial_x^4\gamma(x-x'))K(|\diff{\gamma}|)dx'dx\\
    &=\frac{1}{4}\int_\T\int_\T \partial_x[|\partial_x^3\gamma(x)-\partial_x^3\gamma(x-x')|^2]K(|\diff{\gamma}|)dx'dx\\
    &=-\frac{1}{4} \int_\T\int_\T |\partial_x^3\gamma(x)-\partial_x^3\gamma(x-x')|^2 \partial_x[K(|\diff{\gamma}|)]dx'dx\\
    &= -\frac{1}{4} \int_\T\int_\T |\diff{\partial_x^3\gamma}|^2\; \frac{\diff{\gamma}\cdot \diff{\partial_x\gamma}}{|\diff{\gamma}|}\;  K'(|\diff{\gamma}|) dx' dx.
\end{align*}

Due to the inequality $|\partial_x\gamma(x)-\partial_x\gamma(x-x')||x'|^{-1}\leq \Ck{\gamma}$, and the monotonicity assumption, it follows that 
\begin{align*}
    |I_1|&\leq C \Ck{\gamma}  \int_\T\int_\T |\partial_x^3\gamma(x)-\partial_x^3\gamma(x-x')|^2 |x'| |K'(|\gamma(x)-\gamma(x-x')|)| dx'dx \\
    &= C \Ck{\gamma} \int_\T\int_\T |\partial_x^3\gamma(x)-\partial_x^3\gamma(x-x')|^2 |x'| \left|K'\left(\frac{|x'|}{F(\gamma)}\right)\right|dx'dx\\
    &\leq C \Ck{\gamma} \int_\T\int_\T |\partial_x^3\gamma(x)-\partial_x^3\gamma(x-x')|^2 |x'| \left|K'\left(\frac{|x'|}{\Np{F(\gamma)}}\right)\right|dx'dx\\
    &= C \Ck{\gamma} \int_\T |x'| \left|K'\left(\frac{|x'|}{\Np{F(\gamma)}}\right)\right| \int_\T |\partial_x^3\gamma(x)-\partial_x^3\gamma(x-x')|^2 dxdx'\\
    &\leq  C \Ck{\gamma} \int_\T |x'| \left|K'\left(\frac{|x'|}{\Np{F(\gamma)}}\right)\right| \int_\T |\partial_x^3\gamma(x)|^2+|\partial_x^3\gamma(x-x')|^2 dxdx'\\
    &\leq  C \Np{F(\gamma)} \Ck{\gamma} \Lp{\partial_x^3\gamma}^2 \left|\int_\T \frac{|x'|}{\Np{F(\gamma)}} K'\left(\frac{|x'|}{\Np{F(\gamma)}}\right)dx' \right|\\
    &\leq C \Np{F(\gamma)}^2 \Ck{\gamma} \Lp{\partial_x^3\gamma}^2 \left|\int_0^{\frac{\pi}{\Np{F(\gamma)}}} \nu K'\left(\nu\right)d\nu \right|.
\end{align*}

Using lemma \ref{lemma:canviVar} with $S=\frac{\pi}{\Np{F(\gamma)}}$  finally gives $$|I_1|\leq  C (\Np{F(\gamma)}^2 + 1 )\Ck{\gamma} \Lp{\partial_x^3\gamma}^2. $$

By proceeding as before it is easy to see that $I_2=-6I_1$, so the same bound holds for $I_2$. We now estimate $\frac{1}{3}I_3=J_1+J_2+J_3+J_4$ where 

{\allowdisplaybreaks
\begin{align*}
    J_1 &= \intTT \partial_x^3\gamma(x) \cdot \diff{\partial_x^2\gamma} 
            K''(|\diff{\gamma}|) \, \frac{(\diff{\gamma} \cdot \diff{\partial_x\gamma})^2}{|\diff{\gamma}|^2} \, dx' \, dx,\\[4pt]
    J_2 &= \intTT \partial_x^3\gamma(x) \cdot \diff{\partial_x^2\gamma} \,
            K'(|\diff{\gamma}|) \, \frac{\diff{\gamma} \cdot \diff{\partial_x^2\gamma}}{|\diff{\gamma}|} \, dx' \, dx,\\[4pt]
    J_3 &= \intTT \partial_x^3\gamma(x) \cdot \diff{\partial_x^2\gamma}  \,
            K'(|\diff{\gamma}|) \, \frac{|\diff{\partial_x\gamma}|^2}{|\diff{\gamma}|} \, dx' \, dx,\\[4pt]
    J_4 &= -\intTT \partial_x^3\gamma(x) \cdot \diff{\partial_x^2\gamma} \,
            K'(|\diff{\gamma}|) \, \frac{(\diff{\gamma} \cdot \diff{\partial_x\gamma})^2}{|\diff{\gamma}|^3} \, dx' \, dx .
\end{align*}
}

    Notice that by the mean value inequality
    \begin{equation}\label{eq:MVTgammaH3}
        \diff{\partial_x^2\gamma}=\partial_ x^2\gamma(x)-\partial_x^2\gamma(x-x')=x'\int_0^1\partial_x^3 \gamma(x+(s-1)x')ds,
    \end{equation} thus
    \begin{align*}
        |&J_1|\leq \Ck{\gamma}^2\int_\T\int_\T |x'|^3 |K''(|\diff{\gamma}|)| \int_0^1 |\partial_x^3\gamma(x)||\partial_x^3 \gamma(x+(s-1)x')|ds dx'dx \\
        & \leq \Ck{\gamma}^2\int_\T\int_\T |x'|^3 |K''(|\diff{\gamma}|)| \int_0^1 |\partial_x^3\gamma(x)|^2+|\partial_x^3 \gamma(x+(s-1)x')|^2ds dx'dx \\
        &\leq 2\Ck{\gamma}^2 \Lp{\partial_x^3\gamma}^2\int_\T\int_\T |x'|^3 |K''(|\gamma(x)-\gamma(x-x')|)| dx'dx \\
        &= 2\Ck{\gamma}^2 \Lp{\partial_x^3\gamma}^2\int_\T\int_\T |x'|^3 \left|K''\left(\frac{|x'|}{F(\gamma)}\right)\right| dx'dx \\
        &\leq 2\Ck{\gamma}^2 \Lp{\partial_x^3\gamma}^2 \Np{F(\gamma)}^3\left|\int_\T\int_\T \frac{|x'|^3}{\Np{F(\gamma)}^3} K''\left(\frac{|x'|}{\Np{F(\gamma)}}\right) dx'dx \right| \\
        & \leq C \Ck{\gamma}^2 \Lp{\partial_x^3\gamma}^2 \Np{F(\gamma)}^4\left|\int_\T\int_0^{\frac{\pi}{\Np{F(\gamma)}}} \nu^3 K''(\nu) d\nu dx \right|,
    \end{align*}
    and by Lemma \ref{lemma:canviVar} 
    $$|J_1|\leq C \Ck{\gamma}^2 \Lp{\partial_x^3\gamma}^2 (\Np{F(\gamma)}^4+1).$$

    And proceeding in the same way
    \begin{align*}
        |J_2|&\leq \int_\T\int_\T |\diff{\partial_x^2\gamma}| |K'(|\diff{\gamma}|)|x'|\int_0^1 |\partial_x^3\gamma(x)||\partial_x^3\gamma(x+(s-1)x')|ds dx'dx \\
        &\leq C \Lp{\partial_x^3\gamma}^2 \int_\T \int_\T (|\partial_x^2\gamma(x)|+|\partial_x^2\gamma(x-x')|) |K'(|\gamma(x)-\gamma(x-x')|)|x'| dx'dx\\
        &\leq C \Lp{\partial_x^3\gamma}^2 \Np{F(\gamma)}^2 \Ck{\gamma} \left|\int_\T\int_0^{\frac{\pi}{\Np{F(\gamma)}}} \nu K'(\nu) d\nu dx\right|\\
        &\leq C \Lp{\partial_x^3\gamma}^2 \Ck{\gamma} (\Np{F(\gamma)}^2 + 1),
    \end{align*}

    and 
    \begin{align*}
        |J_3|&\leq \Ck{\gamma}^2\Np{F(\gamma)} \int_\T\int_\T |K'(|\diff{\gamma}|)|x'|^2\int_0^1 |\partial_x^3\gamma(x)||\partial_x^3\gamma(x+(s-1)x')|ds dx'dx
        \\ &\leq  C \Lp{\partial_x^3\gamma}^2 \Np{F(\gamma)}^3 \Ck{\gamma}^2\left|\int_\T\int_0^{\frac{\pi}{\Np{F(\gamma)}}} \nu^2 K'(\nu) d\nu dx\right|
        \\ &\leq C \Lp{\partial_x^3\gamma}^2 \Ck{\gamma}^2 (\Np{F(\gamma)}^3+1),
    \end{align*}

    for $J_4$ one gets the same bound as for $J_3,$ so one gets 
    \begin{align*}
        |I_3|\leq& C \Ck{\gamma}\Lp{\partial_x^3\gamma}^2 \bigg( (1+\Ck{\gamma}\Np{F(\gamma)}+\Ck{\gamma}\Np{F(\gamma)}^2)\\ &\hspace{2cm}\cdot(\Np{F(\gamma)}^2+ 1+\Ck{\gamma}) \bigg).
    \end{align*}   

Finally we write $I_4=\sum_{i=5}^{13} J_i$ where

{\allowdisplaybreaks
\thinmuskip=2mu
\medmuskip=2.5mu plus 1mu minus 1mu
\thickmuskip=4mu plus 2mu minus 1mu
\jot=2pt
\begin{align*}
    J_5 &= \intTT \partial_x^3\gamma(x) \cdot \diff{\partial_x\gamma} \,
            K'''(|\diff{\gamma}|) \, \frac{(\diff{\gamma} \cdot \diff{\partial_x\gamma})^3}{|\diff{\gamma}|^3} \, dx' dx,\\[4pt]
    J_6 &= 3  \intTT  \partial_x^3\gamma(x) \cdot \diff{\partial_x\gamma}  \,
            K''(|\diff{\gamma}|) \, \frac{(\diff{\gamma} \cdot \diff{\partial_x\gamma})\,|\diff{\partial_x\gamma}|^2}{|\diff{\gamma}|^2} \, dx' dx,\\[4pt]
    J_7 &= 3 \intTT  \partial_x^3\gamma(x) \cdot \diff{\partial_x\gamma}
            K''(|\diff{\gamma}|) \frac{(\diff{\gamma} \cdot \diff{\partial_x\gamma})(\diff{\gamma} \cdot \diff{\partial_x^2\gamma})}{|\diff{\gamma}|^2} dx' dx,\\[4pt]
    J_8 &= -3  \intTT  \partial_x^3\gamma(x) \cdot \diff{\partial_x\gamma} \,
            K''(|\diff{\gamma}|) \, \frac{(\diff{\gamma} \cdot \diff{\partial_x\gamma})^3}{|\diff{\gamma}|^4} \, dx' dx,\\[4pt]
    J_9 &= 3 \intTT  \partial_x^3\gamma(x) \cdot \diff{\partial_x\gamma} \,
            K'(|\diff{\gamma}|) \, \frac{\diff{\partial_x\gamma} \cdot \diff{\partial_x^2\gamma}}{|\diff{\gamma}|} \, dx' dx,\\[4pt]
    J_{10} &= -3 \intTT  \partial_x^3\gamma(x) \cdot \diff{\partial_x\gamma} \,
            K'(|\diff{\gamma}|) \, \frac{|\diff{\partial_x\gamma}|^2\,(\diff{\gamma} \cdot \diff{\partial_x\gamma})}{|\diff{\gamma}|^3} \, dx' dx,\\[4pt]
    J_{11} &=  \intTT  \partial_x^3\gamma(x) \cdot \diff{\partial_x\gamma} \,
            K'(|\diff{\gamma}|) \, \frac{\diff{\gamma} \cdot \diff{\partial_x^3\gamma}}{|\diff{\gamma}|} \, dx' dx,\\[4pt]
   J_{12} &= -3 \int\limits_{\mathbb T}\mkern-8mu\int\limits_{\mathbb T} \partial_x^3\gamma(x)\cdot\diff{\partial_x\gamma}
            K'(|\diff{\gamma}|) \frac{(\diff{\gamma}\cdot\diff{\partial_x\gamma})(\diff{\gamma}\cdot\diff{\partial_x^2\gamma})}{|\diff{\gamma}|^3} dx' dx,\\[4pt]
    J_{13} &= 3  \intTT  \partial_x^3\gamma(x) \cdot \diff{\partial_x\gamma}  \,
            K'(|\diff{\gamma}|) \, \frac{(\diff{\gamma} \cdot \diff{\partial_x\gamma})^3}{|\diff{\gamma}|^5} \, dx' dx .
\end{align*}
}

    By proceeding as in the previous cases we have
    \begin{align*}
        |J_5|&\leq C \Ck{\gamma}^3 \int_\T |x'|^3\left| K^{(3)}\left( \frac{|x'|}{F(\gamma)}\right) \right| \int_\T |\partial_x^3\gamma(x)||\partial_x\gamma(x)-\partial_x\gamma(x-x')| dxdx'\\
        &\leq C \Ck{\gamma}^3 \Lp{\partial_x\gamma }^2\Lp{\partial_x^3\gamma }^2\Np{F(\gamma)}^4 \left|\int_0^{\frac{\pi}{\Np{F(\gamma)}}} \nu^3 K^{(3)}(\nu) d\nu \right|, 
    \end{align*}

    \begin{align*}
        |J_6|&\leq C \Ck{\gamma}^3 \Np{F(\gamma)}\int_\T |x'|^2\left| K''\left( \frac{|x'|}{F(\gamma)}\right) \right| \int_\T |\partial_x^3\gamma(x)||\partial_x\gamma(x)-\partial_x\gamma(x-x')| dxdx'\\
        &\leq C \Ck{\gamma}^3 \Lp{\partial_x\gamma }^2\Lp{\partial_x^3\gamma }^2\Np{F(\gamma)}^4 \left|\int_0^{\frac{\pi}{\Np{F(\gamma)}}} \nu^2 K''(\nu) d\nu \right|, 
    \end{align*}
    the same bound holds for $|J_8|$.
    \begin{align*}
        |J_7|&\leq C \Ck{\gamma}^2 \int_\T\int_\T |x'|^3\left|K''\left(\frac{|x'|}{F(\gamma)}\right)\right| \int_0^1 |\partial_x^3\gamma(x)||\partial_x^3\gamma(x+(s-1)x')|dsdx'dx\\
        &\leq C \Ck{\gamma}^2 \Lp{\partial_x^3\gamma}^2\Np{F(\gamma)}^4 \left|\int_\T\int_0^{\frac{\pi}{\Np{F(\gamma)}}} \nu^3 K''(\nu) d\nu dx \right|,
    \end{align*}

     \begin{align*}
        |J_9|&\leq C \Ck{\gamma}^2 \Np{F(\gamma)} \int_\T\int_\T |x'|^2\left|K'\left(\frac{|x'|}{F(\gamma)}\right)\right| \int_0^1 |\partial_x^3\gamma(x)||\partial_x^3\gamma(x+(s-1)x')|dsdx'dx\\
        &\leq C \Ck{\gamma}^2 \Lp{\partial_x^3\gamma}^2\Np{F(\gamma)}^4 \left|\int_\T\int_0^{\frac{\pi}{\Np{F(\gamma)}}} \nu^2 K'(\nu) d\nu dx \right|,
    \end{align*}

    the same bound holds for $|J_{12}|.$

    \begin{align*}
        |J_{10}|&\leq C \Ck{\gamma}^3 \Np{F(\gamma)}^2\int_\T |x'|\left| K'\left( \frac{|x'|}{F(\gamma)}\right) \right|  \int_\T |\partial_x^3\gamma(x)||\partial_x\gamma(x)-\partial_x\gamma(x-x')| dxdx'\\
        &\leq C \Ck{\gamma}^3 \Lp{\partial_x\gamma }^2\Lp{\partial_x^3\gamma }^2\Np{F(\gamma)}^4 \left|\int_0^{\frac{\pi}{\Np{F(\gamma)}}} \nu K'(\nu) d\nu \right|, 
    \end{align*}

    the same bound holds for $|J_{13}|.$
    
    \begin{align*}
        |J_{11}|&\leq \Ck{\gamma} \int_\T |x'| \left| K'\left(\frac{|x'|}{\Np{F(\gamma)}}\right)\right|\int_\T |\partial_x^3\gamma(x)|^2+|\partial_x^3\gamma(x)||\partial_x^3\gamma(x-x')| dxdx' \\
        &\leq C \Ck{\gamma} \Lp{\partial_x^3\gamma}^2 \Np{F(\gamma)}^2 \left|\int_0^{\frac{\pi}{\Np{F(\gamma)}}} \nu K'(\nu) d\nu\right|.
    \end{align*}

   So 
   \begin{align*}
        |I_4|
        &\leq C \Ck{\gamma}\Lp{\partial_x^3\gamma}^2
        \Big(\Np{F(\gamma)}^4\big(\Ck{\gamma}^2 \Lp{\partial_x\gamma}^2+ \Ck{\gamma}
        \big) 
        \\&\qquad + \Np{F(\gamma)}^2\big(\Ck{\gamma}^2 \Lp{\partial_x\gamma}^2
        + 1
        \big)
        + \Np{F(\gamma)}
        \big(
        \Ck{\gamma}^2 \Lp{\partial_x\gamma}^2
        + \Ck{\gamma}
        \big) \\
        &\qquad
        + \big(
        \Ck{\gamma}^2 \Lp{\partial_x\gamma}^2
        + \Ck{\gamma}
        + 1
        \big)
        \Big).
    \end{align*}
  
    Combining all of the above expressions one gets that $$\frac{d}{dt} \Lp{\partial_x^3 \gamma}^2(t) \leq C (1+\Np{F(\gamma)}^4(t))\Ck{\gamma}^3 (t) \Hs{\gamma}^2(t),$$ and by definition of the Sobolev space norm also 
    $$\frac{d}{dt} \Hs{\gamma}(t) \leq C (1+\Np{F(\gamma)}(t))^4\Ck{\gamma}^3 (t) \Hs{\gamma}(t).$$
    Finally, using Sobolev inequalities, we obtain
    \begin{equation}\label{eq:evolucioSobolevNormH3}
    \frac{d}{dt} \Hs{\gamma}(t) \leq C (1+\Np{F(\gamma)}(t))^4\Hs{\gamma}^4(t).\end{equation}
  
   Following \cite{gancedo2008SQG}, we regularize equation \eqref{eq:patch} in order to apply energy methods (see \cite{MajdaBertozzi} for a comprehensive treatment):
    \begin{equation}\label{eq:patchRegular}
    \scalebox{0.99}{$
    \left\{\begin{aligned}
        \partial_t \gamma^\varepsilon(x,t) &= - \phi_\varepsilon*\int_\T K(|\gamma^\varepsilon(x)-\gamma^\varepsilon(x-x')|)(\partial_x(\phi_\varepsilon* \gamma^\varepsilon(x)-\phi_\varepsilon*\gamma^\varepsilon(x-x')))dx',\\ 
        \gamma^\varepsilon(x,0)&= \gamma_0(x),
    \end{aligned}\right.$}
    \end{equation}
    
    where $\phi_{\varepsilon}$ is a regular approximation to the identity. If the inequality \eqref{eq:arcChord}
   holds initially, due to the properties of the regular approximations to the identity, we get
    a Picard system as follows
    \begin{align*}
    \gamma^\varepsilon_t(x,t)&=G^\varepsilon(\gamma^{\varepsilon}(x,t)),\\
    \gamma^\varepsilon(x,0)&=\gamma_0(x),
    \end{align*}
    where $G^{\varepsilon}$ is Lipschitz. Therefore, for any $\varepsilon>0$, we obtain a time of existence $t_\varepsilon$
    where \eqref{eq:arcChord} is fulfilled. To obtain a time of existence for \eqref{eq:patchRegular} that is independent of $\varepsilon$, one needs energy estimates with bounds uniform in $\varepsilon$. Passing then to the limit $\varepsilon \to 0$ yields solutions of the original equation. In this particular case, we obtain
    \begin{equation*}
    \frac{d}{dt}\Hs{\gamma^\varepsilon}(t)\leq C (1+\Np{F(\gamma^\varepsilon)}^{4}(t))\Hs{\gamma^{\varepsilon} }^4(t),
    \end{equation*}
    and it may happen that $\Np{F(\gamma^\varepsilon)}\rightarrow \infty$ as $\varepsilon\to 0$. The energy estimate depends on $\varepsilon$, and the argument breaks down. In particular, even if the initial data satisfy \eqref{eq:arcChord}, we cannot guarantee the existence of a time $t>0$ independent of $\varepsilon$ for which \eqref{eq:arcChord} continues to hold. At this stage of the proof, the limiting system as $\varepsilon \to 0$ is not yet well-posed, since the Lipschitz constant of $G^\varepsilon$ may diverge as $\varepsilon \to 0$.
    
    To overcome this difficulty, we consider the evolution of the quantity $\Np{F(\gamma)}.$    
    
    Taking $p>2$ it follows 
    \begin{align*}
        \frac{d}{dt} \Np[p]{F(\gamma)}^p&= \frac{d}{dt} \int_\T\int_\T \left( \frac{|x'|}{|\gamma(x,t)-\gamma(x-x',t)|}\right)^p dxdx'\\
        &\leq p \int_\T\int_\T \left( \frac{|x'|}{|\gamma(x,t)-\gamma(x-x',t)|}\right)^{p+1}\frac{|\gamma_t(x,t)-\gamma_t(x-x',t)|}{|x'|} dxdx'.
    \end{align*}
    From \eqref{eq:patch}, we have 
    \begin{align*}
    \gamma_t(x)-\gamma_t(x-x')
    &= \int_{\T} K\bigl(|\diff[\xi]{\gamma}|\bigr)\,\diff[\xi]{\partial_x\gamma}\,d\xi 
       - \int_{\T} K\bigl(|\diff[\xi][x-x']{\gamma}|\bigr)\,\diff[\xi][x-x']{\partial_x\gamma}\,d\xi \\
    &= \int_{\T} \Bigl[K\bigl(|\diff[\xi]{\gamma}|\bigr)-K\bigl(|\diff[\xi][x-x']{\gamma}|\bigr)\Bigr]\,\diff[\xi]{\partial_x\gamma}\,d\xi \\
    &\quad - \int_{\T} K\bigl(|\diff[\xi][x-x']{\gamma}|\bigr)\,\Bigl[\diff[\xi]{\partial_x\gamma} - \diff[\xi][x-x']{\partial_x\gamma}\Bigr]\,d\xi \\
    &= I_5 + I_6.
\end{align*}

    Notice that 
    \begin{equation}\label{eq:diferenciaKernelsRTI}
        |K(r)-K(r')| \leq  |r-r'|\int_0^1 |K'(s r + (1-s) r')|ds\leq |r-r'| (|K'(r)|+|K'(r')|),
    \end{equation}    
    thus, taking $r=|\gamma(x)-\gamma(x-\xi)|$, $r'=|\gamma(x-x')-\gamma(x-x'-\xi)|$ and using reverse triangular inequality ($||x|-|y||\leq |x-y| $) one gets 
    {\allowdisplaybreaks
    \begin{align*}
        |I_5|&\leq C\Ck{\gamma} \int_\T |\xi|\big|K(|\gamma(x)-\gamma(x-\xi)|)-K(|\gamma(x-x')-\gamma(x-x'-\xi)|)\big| d\xi \\       
        &\leq C\Ck{\gamma} \int_\T |\xi|(|\gamma(x)-\gamma(x-x')| + |\gamma(x-\xi)-\gamma(x-x'-\xi))| \\&\cdot [|K'(|\gamma(x)-\gamma(x-\xi)|)|+|K'(|\gamma(x-x')-\gamma(x-x'-\xi)|)|]d\xi\\
        &\leq C\Ck{\gamma}^2 |x'| \int_\T |\xi| [|K'(|\diff[\xi]{\gamma}|)|+|K'(|\diff[\xi][x-x']{\gamma}|)|]d\xi \\
        &\leq C\Ck{\gamma}^2 |x'| \Np{F(\gamma)} \int_\T \frac{|\xi|}{\Np{F(\gamma)}} \left|K'\left(\frac{|\xi|}{\Np{F(\gamma)}}\right)\right|d\xi \\
        &\leq C\Ck{\gamma}^2 |x'| \Np{F(\gamma)}^2 \left|\int_0^{\frac{\pi}{\Np{F(\gamma)}}} \nu K'(\nu) d\nu \right| \\&\leq C\Ck{\gamma}^2 |x'| (\Np{F(\gamma)}^2 + 1).
    \end{align*}}

    Finally, for $I_6$ we have 
    \begin{align*}
        |I_6|&\leq \int_\T |K(|\gamma(x-x')-\gamma(x-x'-\xi)|)| \\ & \cdot  [|\partial_x\gamma(x)-\partial_x\gamma(x-x')|+|\partial_x\gamma(x-x'-\xi)-\partial_x\gamma(x-\xi)|]d\xi\\
        &\leq C \Ck{\gamma} |x'| \int_\T |K(|\gamma(x-x')-\gamma(x-x'-\xi)|)| d\xi.
    \end{align*}

    Let \(d = |\gamma(x-x')-\gamma(x-x'-\xi)|\). From the chord‑arc condition and the mean-value inequality,
    \[
    \frac{1}{\Np{F(\gamma)}}|\xi| \le d \le \Np{\partial_x\gamma}|\xi|,\qquad \text{ and } \Np{F(\gamma)}\Np{\partial_x\gamma}\geq 1.
    \]
    
    We split the \(\xi\)-integral into three regions based on the size of \(|\xi|\):
    \begin{equation}\label{eq:splitKernel3regions}
        \int_\T |K(|\gamma(x-x')-\gamma(x-x'-\xi)|)| d\xi = I + II + III.
    \end{equation}
    
    Empty integration regions are understood to give zero contribution. Then
    
    \begin{align*}
        I&=\int_{|\xi| \le \delta/\Np{\partial_x\gamma}} |K(d)|d\xi \le \int_{|\xi|\le\delta/\Np{\partial_x\gamma}} |K\left(\frac{|\xi|}{\Np{F(\gamma)}}\right)|\,d\xi\\& = 2\int_0^{\delta/\Np{\partial_x\gamma}} |K\left(\frac{|\xi|}{\Np{F(\gamma)}}\right)|\,d\xi = 2 \Np{F(\gamma)} \int_0^{\frac{\delta}{\Np{F(\gamma)} \Np{\partial_x\gamma}}} |K(\nu)|\,d\nu\\& \le 2\Np{F(\gamma)}\int_0^{\delta} |K(\nu)|\,d\nu = C \Np{F(\gamma)},
    \end{align*}
    
    since in this region \(d \le \Np{\partial_x\gamma}|\xi| \le \delta\), so \(d\in(0,\delta]\) and therefore we can use monotonicity of $|K|$ up to $\delta$.   

    \begin{align*}
    II&=
\int_{\delta/\|\partial_x\gamma\|_{L^\infty}}^{\delta\|F(\gamma)\|_{L^\infty}} |K(d)|\,d\xi 
\\&\le \int_{\delta/\|\partial_x\gamma\|_{L^\infty}}^{\delta\|F(\gamma)\|_{L^\infty}} \left|K\left(\frac{|\xi|}{\|F(\gamma)\|_{L^\infty}}\right)\right| d\xi + C\int_{\delta/\|\partial_x\gamma\|_{L^\infty}}^{\delta\|F(\gamma)\|_{L^\infty}} \bigl(1 + \|\partial_x\gamma\|_{L^\infty}^M |\xi|^M\bigr)\,d\xi.
\end{align*}

The first term is treated like in $I$
\begin{align*}
\int_{\delta/\|\partial_x\gamma\|_{L^\infty}}^{\delta\|F(\gamma)\|_{L^\infty}} \left|K\left(\frac{|\xi|}{\|F(\gamma)\|_{L^\infty}}\right)\right| d\xi
&\leq 2\|F(\gamma)\|_{L^\infty} \int_{\frac{\delta}{\|F(\gamma)\|_{L^\infty}\|\partial_x\gamma\|_{L^\infty}}}^{\delta} |K(\nu)|\,d\nu
\\&\le 2\|F(\gamma)\|_{L^\infty} \int_0^{\delta} |K(\nu)|\,d\nu = C\|F(\gamma)\|_{L^\infty}.
\end{align*}

The second term is bounded by
\[
C\int_0^{\pi} d\xi + C \|\partial_x\gamma\|_{L^\infty}^M \int_0^{\pi} |\xi|^M d\xi \le C(1 + \|\partial_x\gamma\|_{L^\infty}^M),
\]
since the integration interval is contained in \([0,\pi]\).

Finally,
\begin{align*}
    III&=\int_{|\xi|>\delta\|F(\gamma)\|_{L^\infty}} |K(d)|\,d\xi 
    \le C\int_{\delta\|F(\gamma)\|_{L^\infty}}^{\pi} \bigl(1 + \|\partial_x\gamma\|_{L^\infty}^M |\xi|^M\bigr)\,d\xi \\
    &\le C\int_0^{\pi} d\xi + C \|\partial_x\gamma\|_{L^\infty}^M \int_0^{\pi} |\xi|^M d\xi \le C(1 + \|\partial_x\gamma\|_{L^\infty}^M),
\end{align*}
since \(|\xi|/\|F(\gamma)\|_{L^\infty} > \delta\), so \(d \ge |\xi|/\|F(\gamma)\|_{L^\infty} > \delta\) and the polynomial growth can be applied.
    
    Therefore
    \begin{equation} \label{eq:controlIntKernelDif}
         \intT |K(|\gamma(x-x')-\gamma(x-x'-\xi)|)|\,d\xi \le C\left( \Np{F(\gamma)} + 1 + \Np{\partial_x\gamma}^M \right),
    \end{equation}  

   and 
   $$|I_6|\leq C \Ck{\gamma} |x'| \bigl( \Np{F(\gamma)} + 1 + \|\partial_x\gamma\|_{L^\infty}^M \bigr). $$
   
The estimates for \(I_5\) and \(I_6\) obtained above yield
\begin{align*}
\frac{|\gamma_t(x)-\gamma_t(x-x')|}{|x'|}
&\le C\Bigl( \|\gamma\|_{C^2}^2(t)\|F\|_{L^\infty}^2(t) + \|\gamma\|_{C^2}(t)\|F\|_{L^\infty}(t) + \|\gamma\|_{C^2}^{M+1}(t) \Bigr)\\
&\eqqcolon C\mathcal{P}_0(\Ck{\gamma}(t),\Np{F(\gamma)}(t)),
\end{align*}
where \(M\) is the exponent from the polynomial growth condition \ref{ass:compKernel} and $\mathcal{P}_0$ a polynomial. Consequently,
\begin{align*}
\frac{d}{dt}\|F\|_{L^p}^p(t) &\le pC\,\mathcal{P}_0(\Ck{\gamma}(t),\Np{F(\gamma)}(t)) \int_{\mathbb{T}^2} F^{p+1}(x,x',t)\,dx\,dx'\\
&\le pC\,\mathcal{P}_0(\Ck{\gamma}(t),\Np{F(\gamma)}(t))\|F\|_{L^\infty}(t)  \|F\|_{L^p}^p(t).
\end{align*}

Thus,
\[
\frac{d}{dt}\|F\|_{L^p}(t) \le C\,\mathcal{P}_0(\Ck{\gamma}(t),\Np{F(\gamma)}(t)) \|F\|_{L^\infty}(t) \|F\|_{L^p}(t).
\]

Integrating in time and then letting \(p\to\infty\) (see \cite{gancedo2008SQG} for the limiting argument) we obtain
\begin{multline*}
    \|F\|_{L^\infty}(t+h) \le \|F\|_{L^\infty}(t) \exp\Bigl( C\int_t^{t+h} \bigl( \|\gamma\|_{C^2}^2(s)\|F\|_{L^\infty}^3(s) + \|\gamma\|_{C^2}(s)\|F\|_{L^\infty}^2(s) \\+ \|\gamma\|_{C^2}^{M+1}(s)\|F\|_{L^\infty}(s) \bigr) ds \Bigr).
\end{multline*}

Differentiating at \(h=0\) yields
\[
\frac{d}{dt}\|F\|_{L^\infty}(t) \le C\Bigl( \|\gamma\|_{C^2}^2(t)\|F\|_{L^\infty}^4(t) + \|\gamma\|_{C^2}(t)\|F\|_{L^\infty}^3(t) + \|\gamma\|_{C^2}^{M+1}(t)\|F\|_{L^\infty}^2(t) \Bigr).
\]

Applying Sobolev inequalities \(\|\gamma\|_{C^2}(t)\le C\|\gamma\|_{H^3}(t)\) we obtain
\[
\frac{d}{dt}\|F\|_{L^\infty}(t) \le C\Bigl( \|\gamma\|_{H^3}^2(t)\|F\|_{L^\infty}^4(t) + \|\gamma\|_{H^3}(t)\|F\|_{L^\infty}^3(t) + \|\gamma\|_{H^3}^{M+1}(t)\|F\|_{L^\infty}^2(t) \Bigr).
\]

All terms on the right-hand side are of polynomial order in \(\|\gamma\|_{H^3}\) and \(\|F\|_{L^\infty}\). Adding this estimate to the energy inequality \eqref{eq:evolucioSobolevNormH3} we obtain
\[
\frac{d}{dt}\bigl( \|\gamma\|_{H^3}(t) + \|F\|_{L^\infty}(t) \bigr) \le C \bigl( \|\gamma\|_{H^3}(t) + \|F\|_{L^\infty}(t) \bigr)^{M'},
\]
for some sufficiently large \(M'\) (for instance, \(M' = \max\{8, M+4\}\)). Integrating the differential inequality we obtain for sufficiently small \(t\)
\begin{equation}\label{eq:integralLocalExistenceH3}
\scalebox{1.1}{$
    \Hs{\gamma}(t) + \Np{F(\gamma)}(t) \le \frac{\Hs{\gamma_0} + \Np{F(\gamma_0)}}{\bigl(1 - tC(M'-1)\bigl(\Hs{\gamma_0} + \Np{F(\gamma_0)}\bigr)^{M'-1}\,\bigr)^{\frac{1}{M'-1}}}.$}
\end{equation}

The estimates just obtained are uniform in \(\varepsilon\), since the constants depend only on the kernel quantities in Lemmas \ref{lemma:intDerivades}--\ref{lemma:canviVar}, on \(\|\gamma^\varepsilon\|_{H^3}\), and on \(\|F(\gamma^\varepsilon)\|_{L^\infty}\), but not on the mollification parameter. The same bounds hold for the regularized solutions \(\gamma^\varepsilon\). Therefore, one gets a time of existence which is independent of $\varepsilon$. The existence result now follows from the standard regularization and compactness argument for contour dynamics; see \cite{gancedo2008SQG, GNP22}. We briefly note that the passage to the limit applies to the present kernel class: after splitting the contour integral into $|x'|\geq\rho$ and $|x'|<\rho$, convergence on the first region follows from the smoothness of $K$ away from the origin, while the uniform arc-chord bound, the monotonicity of $|K|$ near the origin, and assumption \ref{ass:integrability} provide a uniform integrable majorant on the second region. The usual compactness and energy arguments then yield $\gamma\in C([0,T];H^3(\T))$.

    It remains to show that the solution is unique. Let $\gamma_1$ and $\gamma_2$ be two solutions of equation \eqref{eq:patch} with $\gamma_1(x,0)=\gamma_2(x,0),$ and $\gamma=\gamma_1-\gamma_2.$ Then 
    \begin{align*}
        \int_\T \gamma(x)\cdot\gamma_t(x)dx&= \int_\T\int_\T \gamma(x)\cdot (\partial_x\gamma_1(x)-\partial_x\gamma_1(x-x'))\\&\qquad\cdot(K(|\gamma_1(x)-\gamma_1(x-x')|)-K(|\gamma_2(x)-\gamma_2(x-x')|)) dx'dx\\ 
        &+ \int_\T\int_\T \gamma(x)\cdot (\partial_x\gamma(x)-\partial_x\gamma(x-x')) K(|\gamma_2(x)-\gamma_2(x-x')|)dx'dx\\
        &=I_7+I_8.
    \end{align*}
    Now for $I_7$ we do as for $I_5$ and use \eqref{eq:diferenciaKernelsRTI}:
    \begin{align*}
        |I_7|&\leq C\Ck{\gamma_1}\intTT |\gamma(x)||\gamma(x)-\gamma(x-x')| |x'|\\&\quad\cdot (|K'(|\gamma_1(x)-\gamma_1(x-x')|)|+|K'(|\gamma_2(x)-\gamma_2(x-x')|)|)dx'dx\\
        & \leq C \Ck{\gamma_1} (\Np{F(\gamma_1)}+\Np{F(\gamma_2)}) \\ &\quad\cdot \int_\T \frac{|x'|}{\Np{F(\gamma_1)}}\left|K'\left(\frac{|x'|}{\Np{F(\gamma_1)}}\right)\right|+\frac{|x'|}{\Np{F(\gamma_2)}}\left|K'\left(\frac{|x'|}{\Np{F(\gamma_2)}}\right)\right| \\&\qquad\cdot\int_\T |\gamma(x)||\gamma(x)-\gamma(x-x')|dxdx'\\
        &\leq C \Ck{\gamma_1} (1+\Np{F(\gamma_1)}^2+\Np{F(\gamma_2)}^2)\Lp{\gamma}^2,
    \end{align*}
    where in the last inequality we used Cauchy--Schwarz and Lemma \ref{lemma:canviVar}. Finally, for $I_8$ we proceed as for $I_1$: 
    \begin{align*}
        I_8&= \frac{1}{2} \int_\T\int_\T (\gamma(x)-\gamma(x-x'))\cdot(\partial_x\gamma(x)-\partial_x\gamma(x-x'))K(|\diff{\gamma_2}|) dx'dx\\
        &= \frac{1}{4} \int_\T\int_\T \partial_x[|\gamma(x)-\gamma(x-x')|^2]K(|\diff{\gamma_2}|) dx'dx\\
        &= -\frac{1}{4} \int_\T\int_\T |\gamma(x)-\gamma(x-x')|^2 K'(|\diff{\gamma_2}|) \frac{\diff{\gamma_2}\cdot\diff{\partial_x\gamma_2}}{|\diff{\gamma_2}|} dx'dx,
    \end{align*}
    so
    \begin{align*}
        |I_8|&\leq C \Ck{\gamma_2} \int_\T\int_\T |\gamma(x)-\gamma(x-x')|^2\left|K'\left(\frac{|x'|}{F(\gamma_2)}\right)\right||x'|dx'dx\\
        &\leq C \Ck{\gamma_2} \Np{F(\gamma_2)}\int_\T \frac{|x'|}{\Np{F(\gamma_2)}}\left|K'\left(\frac{|x'|}{\Np{F(\gamma_2)}}\right)\right|\int_\T |\gamma(x)-\gamma(x-x')|^2 dxdx' \\
        &\leq C \Ck{\gamma_2} (1+\Np{F(\gamma_2)}^2) \Lp{\gamma}^2 .
    \end{align*}

    Combining both estimates we obtain $$\frac{d}{dt}\Lp{\gamma}^2(t)\leq C \Lp{\gamma}^2(t),$$ and using Grönwall inequality we conclude that $\gamma=0.$

    \section{\texorpdfstring{Local existence in $H^2$}{Local existence in H2}} \label{sec:H2} 
 
    In this section we prove Theorem \ref{thm:H2}. That is, under the stronger integrability assumption \eqref{ass:strong-integrability} on the kernel $K$, we show that for an initial data $\gamma_0\in H^2(\T)$ with $F(\gamma_0)(x,x')$ bounded, there exists a local solution to the contour dynamics equation \eqref{eq:CDEmod}.

    First, notice that \eqref{ass:strong-integrability} implies \ref{ass:integrability} since by Hölder inequality 

    \begin{align*}
        \int_0^1 |K(r)|\,dr
=
\int_0^1 r^\alpha |K(r)|\,r^{-\alpha}\,dr
\le
\left(\int_0^1 r|K(r)|^{1/\alpha}\,dr\right)^\alpha
\left(\int_0^1 r^{-\frac{\alpha}{1-\alpha}}\,dr\right)^{1-\alpha},
    \end{align*}
    where the first term is bounded by the integrability condition and the second is finite because $\alpha\in (0,1/2)$ guarantees the convergence of the integral.

    To obtain the desired energy estimate in $H^2$, we choose the parameter $\lambda(x,t)$ from \eqref{eq:CDEmod} to get an extra cancellation. The presence of a non‑zero $\lambda$ adds several nontrivial terms to the estimates, but they can be managed with the help of the stronger integrability assumption \eqref{ass:strong-integrability} and the geometric bounds on the curve.
    
    The lower regularity in $H^2$ requires a finer control of the tangential motion, which is achieved by fixing the parametrization so that the tangent vector satisfies
    \begin{equation}\label{eq:extraCancellation}
        \partial_x \gamma (x,t)\cdot \partial_x^2 \gamma(x,t) =0.
    \end{equation}

    Given an initial curve satisfying \eqref{eq:arcChord}, we can reparametrize it obtaining that $|\partial_x\gamma(x,0)|^2=1,$ and therefore \eqref{eq:extraCancellation} is fulfilled at $t=0.$ In general, the condition $|\partial_x \gamma(x,t)|^2 = 1$ may not be preserved for all time; instead, we have
    \begin{equation} \label{eq:cancellationTemps}
        |\partial_x\gamma(x,t)|^2=A(t).
    \end{equation}

    Write $$NL(x,t)\coloneqq -\int_\T K(|\gamma(x)-\gamma(x-x')|)(\partial_x \gamma(x)-\partial_x \gamma(x-x'))dx'.$$

    Taking $\partial_x$ on \eqref{eq:CDEmod}, and not writing the time dependence in order to ease the notation, gives
    $$\partial_x \partial_t \gamma(x) = \partial_x NL(x) + \lambda (x) \partial_x^2\gamma(x) + \partial_x\lambda(x) \partial_x \gamma(x).$$

    By taking now inner product with $\partial_x\gamma(x)$  we have
    $$\partial_x\gamma(x)\cdot \partial_x \partial_t \gamma(x) = \partial_x\gamma(x)\cdot \partial_x NL(x) + \partial_x\gamma(x)\cdot\lambda (x) \partial_x^2\gamma(x) + \partial_x\gamma(x)\cdot\partial_x\lambda(x) \partial_x \gamma(x),$$
    where the second term vanishes since by \eqref{eq:cancellationTemps} $$\partial_x\gamma(x)\cdot\lambda (x) \partial_x^2\gamma(x)=\frac{1}{2}\lambda(x) \partial_x(|\partial_x \gamma(x)|^2)=\frac{1}{2}\lambda(x) \partial_x(A(t)).$$

    Therefore, 
     $$\partial_x\gamma(x)\cdot \partial_x \partial_t \gamma(x) = \partial_x\gamma(x)\cdot \partial_x NL(x) + \partial_x\lambda(x) |\partial_x \gamma(x)|^2,$$ and integrating on the torus we get for the left hand side
     \begin{multline*}
         \int_\T \partial_x\gamma(x)\cdot \partial_x \partial_t \gamma(x) dx = \frac{1}{2}\int_\T  \partial_t (|\partial_x \gamma(x)|^2) dx \\= \frac{1}{2}\int_\T  \partial_t (A(t)) dx = \pi\partial_t (A(t)) =2\pi \partial_x\gamma(x)\cdot \partial_x\partial_t\gamma(x), 
     \end{multline*}

    and due to the fact that $\lambda$ has to be periodic we have for the right hand side
    $$\int_\T \partial_x\gamma(x)\cdot \partial_x NL(x)dx + \int_\T \partial_x\lambda(x) A(t) dx = \int_\T \partial_x\gamma(x)\cdot \partial_x NL(x)dx.$$

    So combining both with \eqref{eq:CDEmod} yields $$2\pi \partial_x\gamma(x)\cdot \partial_x\partial_t\gamma(x)= 2\pi \partial_x\gamma(x)\cdot \partial_x NL(x)+2\pi \partial_x \lambda(x) A(t) = \int_{-\pi}^\pi \partial_y\gamma(y)\cdot \partial_y NL(y)dy,$$

    so 
    \begin{equation} \label{eq:lambda}
        \lambda(x)=\frac{x+\pi}{2\pi} \int_\T \frac{\partial_y \gamma(y) \cdot \partial_y NL(y)}{A(t)} dy - \int_{-\pi}^x \frac{\partial_y \gamma(y)\cdot \partial_y NL(y)}{A(t)}dy.
    \end{equation}

    With the above parametrization in place, we are now prepared to investigate the temporal evolution of the quantity $\Hd{\gamma}^2 = \Lp{\gamma}^2 + \Lp{\partial_x^2 \gamma}^2.$
    Consider first
    \begin{equation}
    \begin{aligned}
        \int_\T \gamma(x)\cdot\gamma_t(x)dx &= \int_\T \gamma(x) \cdot NL(x) dx + \intT \lambda(x)\gamma(x)\cdot\partial_x\gamma(x) dx,
    \end{aligned}
    \end{equation}

    and  proceeding as for the $H^3$ case (see \eqref{eq:gammaL2}) one gets that the first term vanishes. Therefore, by integration by parts 
    \begin{equation}\label{eq:H2L2}
        \partial_t \Lp{\gamma}^2(t)\leq \Np{\partial_x \lambda}(t)\Lp{\gamma}^2(t).
    \end{equation}

    We aim now to bound $\Np{\partial_x \lambda}(t)$. To that end, by product rule and the Fundamental Theorem of Calculus we have 
    \begin{align*}
        \partial_x \lambda &= \frac{1}{2\pi} \int_\T \frac{\partial_x \gamma(x)}{|\partial_x\gamma(x)|^2} \cdot \partial_x\left(\intT -K(|\diff{\gamma}|) \diff{\partial_x\gamma} dx'\right) dx\\&\quad+\frac{\partial_x\gamma(x)}{|\partial_x\gamma(x)|^2} \cdot\intT \diff{\partial_x^2\gamma} K(|\diff{\gamma}|)dx'\\&\quad-\frac{\partial_x\gamma(x)}{|\partial_x\gamma(x)|^2} \cdot\intT \diff{\partial_x\gamma }\frac{\diff{\gamma}\cdot\diff{\partial_x\gamma}}{|\diff{\gamma}|}K'(|\diff{\gamma}|)dx'\eqqcolon A_0+A_1+A_2.
    \end{align*}

    Notice that by \eqref{eq:extraCancellation} we have $$-\partial_x\gamma(x)\cdot(\partial_x^2\gamma(x)-\partial_x^2\gamma(x-x'))=(\partial_x\gamma(x)-\partial_x\gamma(x-x'))\cdot\partial_x^2\gamma(x-x'),$$ thus,

    $$A_1=\intT \frac{\diff{\partial_x \gamma}\cdot \partial_x^2\gamma (x-x')}{|\partial_x\gamma(x)|^2} K(|\diff{\gamma}|)dx'.$$

    So, for any $\alpha\in\left(0,\frac{1}{2}\right]$ we have
    \begin{align*}
        |A_{1}|&\leq\intT \frac{|\diff{\partial_x \gamma}||\partial_x^2\gamma (x-x')|}{|\partial_x\gamma(x)|^2} |K(|\diff{\gamma}|)|dx'\\
        & \leq \Np{F(\gamma)}^2 \intT \frac{|\diff{\partial_x \gamma}||\partial_x^2\gamma (x-x')||\diff{\gamma}|^2}{|\partial_x\gamma(x)|^2|x'|^2} |K(|\diff{\gamma}|)|dx' \\
        &\leq \Np{F(\gamma)}^2 \SNa[\alpha]{\partial_x\gamma}\intT |x'|^\alpha |\partial_x^2\gamma (x-x')||K(|\diff{\gamma}|)|dx',
    \end{align*}
     where in the third inequality we use once more \eqref{eq:cancellationTemps}, i.e. $|\partial_x\gamma(x,t)|^2=A(t)$, and Sobolev embeddings. To finish the bound we fix $\alpha$ so that the extra integrability \eqref{ass:strong-integrability} holds and then we split as in $I_6$ above (see \eqref{eq:splitKernel3regions}) and bound each region separately

     \begin{align*}
         &\int_{|\xi| \le \delta/\Np{\partial_x\gamma}} |x'|^\alpha |\partial_x^2\gamma (x-x')|K\left(|\diff{\gamma}|\right)dx'\\
         &\leq \Np{F(\gamma)}^\alpha \int_{|\xi| \le \delta/\Np{\partial_x\gamma}} \left(\frac{|x'|}{\Np{F(\gamma)}}\right)^\alpha |\partial_x^2\gamma (x-x')| \left|K\left(\frac{|x'|}{\Np{F(\gamma)}}\right)\right| dx'
         \\
         &\leq \Np{F(\gamma)}^\alpha \left(\int_{|\xi| \le \delta/\Np{\partial_x\gamma}} \left(\frac{|x'|}{\Np{F(\gamma)}}\right) \left|K\left(\frac{|x'|}{\Np{F(\gamma)}}\right)\right|^{\frac{1}{\alpha}} dx'\right)^{\alpha} \Lp[p]{\partial_x^2 \gamma}
         \\&\leq C \Np{F(\gamma)}^{1+\alpha} \Lp[p]{\partial_x^2 \gamma} \left(\int_0^\delta \nu |K(\nu)|^{\frac{1}{\alpha}} d\nu\right)^{\alpha}\leq C \Np{F(\gamma)}^{1+\alpha} \Lp[p]{\partial_x^2 \gamma},
     \end{align*}
     where in the second step we used Hölder inequality with $p=\frac{1}{1-\alpha}\in(1,2)$ and in the last step assumption \eqref{ass:strong-integrability}. For the other two regions we proceed analogously
     \begin{align*}
         &\int_{\delta/\|\partial_x\gamma\|_{L^\infty}}^{\delta\|F(\gamma)\|_{L^\infty}} |x'|^\alpha |\partial_x^2\gamma (x-x')| \left|K\left(|\diff{\gamma}|\right)\right| dx'\\
         &\leq \int_{\delta/\|\partial_x\gamma\|_{L^\infty}}^{\delta\|F(\gamma)\|_{L^\infty}} |x'|^\alpha |\partial_x^2\gamma (x-x')|\left(\left|K\left(\frac{|x'|}{\Np{F(\gamma)}}\right)\right|+C(1+\Np{\partial_x\gamma}^M|x'|^M)\right)dx',
     \end{align*}
    where the first part is bounded by the same argument and for the second part we also use Hölder to get  
    \begin{align*}
        &C \int_{\delta/\|\partial_x\gamma\|_{L^\infty}}^{\delta\|F(\gamma)\|_{L^\infty}} |x'|^\alpha |\partial_x^2\gamma (x-x')|(1+\Np{\partial_x\gamma}^M|x'|^M)dx' \\&\leq C \int_0^\pi |x'|^\alpha |\partial_x^2\gamma (x-x')|(1+\Np{\partial_x\gamma}^M|x'|^M)dx' \leq C\Lp[p]{\partial_x^2 \gamma}(1+\Np{\partial_x\gamma}^M).
    \end{align*}

    The last region is treated as the second term of the above estimate. Therefore, $$|A_1|\leq\Np{F(\gamma)}^2 \Na[\alpha]{\partial_x\gamma}\Lp[p]{\partial_x^2 \gamma} (\Np{F(\gamma)}^{1+\alpha}+1+\Np{\partial_x\gamma}^M).$$

    For $A_2,$ we proceed as follows
    \begin{align*}
        |A_2|&\leq \Na[\sigma]{\partial_x\gamma }^2 \intT \frac{|x'|^{2\sigma}|x'|}{|\diff{\gamma}|}| K'(|\diff{\gamma}|)|dx' \\ 
        &\leq \Na[\sigma]{\partial_x\gamma }^2 \Np{F(\gamma)}^{1+2\sigma} \left|\intT \left(\frac{|x'|}{\Np{F(\gamma)}}\right)^{2\sigma}K'\left(\frac{|x'|}{\Np{F(\gamma)}}\right) dx'\right|\\
        &\leq C\Na[\sigma]{\partial_x\gamma }^2 \Np{F(\gamma)}^{2+2\sigma} \int_0^{\frac{\pi}{\Np{F(\gamma)}}} \nu^{2\sigma} |K'(\nu)| d\nu ,
    \end{align*}

    taking $\sigma=\frac{1}{2}$ gives that by lemma \ref{lemma:canviVar} the last integral is bounded and 
    $$|A_2|\leq  C\Na[1/2]{\partial_x\gamma }^2 \Np{F(\gamma)}(1+\Np{F(\gamma)}^{2}).$$

    A routine computation shows that \(A_0\) behaves like the combination of the \(A_1\) and \(A_2\) terms and therefore 
    \begin{equation}
        \label{eq:controlDerivadaLambda}
        \begin{aligned}       
        \Np{\partial_x\lambda} &\leq C (\Np{F(\gamma)}^2 \Na[\alpha]{\partial_x\gamma}\Lp[p]{\partial_x^2 \gamma} (\Np{F(\gamma)}^{1+\alpha}+1+\Np{\partial_x\gamma}^M) \\&\quad+\Na[1/2]{\partial_x\gamma }^2 \Np{F(\gamma)}(1+\Np{F(\gamma)}^{2})).
         \end{aligned}
    \end{equation}

    The bound on $\Np{\partial_x \lambda}$ derives the bound for \eqref{eq:H2L2}. We control now the quantity $$-\int_\T \partial_x^2\gamma(x)\cdot\partial_x^2\gamma_t(x)dx= I_1+ I_2+ I_3+I_4,$$ where 
    \begin{align*}        I_1&=\int_\T\int_\T\partial_x^2\gamma(x)\cdot(\partial_x^3\gamma(x)-\partial_x^3\gamma(x-x'))K(|\gamma(x)-\gamma(x-x')|)dx'dx,\\         I_2&=2\int_\T\int_\T\partial_x^2\gamma(x)\cdot(\partial_x^2\gamma(x)-\partial_x^2\gamma(x-x'))\partial_x[K(|\gamma(x)-\gamma(x-x')|)]dx'dx,\\        I_3&=\int_\T\int_\T\partial_x^2\gamma(x)\cdot(\partial_x\gamma(x)-\partial_x\gamma(x-x'))\partial_x^2[K(|\gamma(x)-\gamma(x-x')|)]dx'dx,\\
        I_4&=\frac{3}{2}\intT |\partial_x^2\gamma(x)|^2\partial_x\lambda(x) dx .       
    \end{align*}

    In the above, $I_4$ follows from the extra cancellation \eqref{eq:extraCancellation} and integration by parts.
    
    To bound $I_1$ we start by symmetrizing and integrating by parts, which yields
    \begin{align*}
        I_1&=\frac{1}{2}\intTT \diff{\partial_x^2\gamma}\cdot\diff{\partial_x^3\gamma} K(|\diff{\gamma}|)dx'dx \\
         &=\frac{1}{4}\intTT \partial_x [|\diff{\partial_x^2\gamma}|^2] K(|\diff{\gamma}|)dx'dx \\
         &=-\frac{1}{4} \intTT |\diff{\partial_x^2\gamma}|^2 \frac{\diff{\gamma} \cdot \diff{\partial_x\gamma}}{|\diff{\gamma}|} K'(|\diff{\gamma}|) dx'dx.
    \end{align*}
    
    We now decompose 
    \begin{equation}    \label{eq:decompositionDiff}
    \diff{\gamma}\cdot \diff{\partial_x\gamma}=(\diff{\gamma}-\partial_x\gamma(x)x')\cdot \diff{\partial_x\gamma}+\partial_x\gamma(x)x'\cdot \diff{\partial_x\gamma},
    \end{equation}
    and notice that by the extra cancellation \eqref{eq:cancellationTemps} once more 
    \begin{equation}\label{eq:cotaProdUniDif}
        \partial_x\gamma(x)\cdot \diff{\partial_x\gamma} = \frac{1}{2}|\diff{\partial_x\gamma}|^2.
    \end{equation}

    Indeed, \begin{equation*}
        \begin{aligned}
                   |\diff{\partial_x\gamma}|^2&=|\partial_x \gamma(x)|^2+|\partial_x\gamma(x-x')|^2-2|\partial_x\gamma(x)\cdot\partial_x\gamma(x-x')\\
                   &=2|\partial_x\gamma(x)|^2-2|\partial_x\gamma(x)\cdot\partial_x\gamma(x-x')=2\partial_x\gamma(x)\cdot \diff{\partial_x\gamma}.
        \end{aligned}
    \end{equation*}

    Combining \eqref{eq:decompositionDiff} and \eqref{eq:cotaProdUniDif} with the fundamental theorem of calculus one gets the following bound for all $\sigma\in\left(0,\frac{1}{2}\right]$  
    \begin{equation}\label{eq:cotaProdDiferencies}
        |\diff{\gamma}\cdot\diff{\partial_x \gamma} | \leq \frac{3}{2}|x'|^{1+2\sigma} \SNa[\sigma]{\partial_ x \gamma }^2.
    \end{equation}
    
    Using this bound we get 
    \begin{align*}
        |I_1|&\leq C \intTT |\diff{\partial_x^2\gamma}|^2 \frac{|x'|^{1+2\sigma} \SNa[\sigma]{\partial_ x \gamma }^2}{|\diff{\gamma}|} |K'(|\diff{\gamma}|)| dx'dx.
        \\
        &= C \SNa[\sigma]{\partial_ x \gamma }^2 \intTT |\diff{\partial_x^2\gamma}|^2 |F(\gamma)|^{1+2\sigma}\left(\frac{|x'|}{|F(\gamma)|}\right)^{2\sigma} \left|K'\left(\frac{|x'|}{|F(\gamma)|}\right)\right| dx'dx \\
        &\leq  C \SNa[\sigma]{\partial_ x \gamma }^2 \Np{F(\gamma)}^{1+2\sigma} \int_\T\left(\frac{|x'|}{\Np{F(\gamma)}}\right)^{2\sigma} \left| K'\left(\frac{|x'|}{\Np{F(\gamma)}}\right) \right| \int_\T |\diff{\partial_x^2\gamma}|^2  dxdx'\\
        &\leq C \SNa[\sigma]{\partial_ x \gamma }^2 \Np{F(\gamma)}^{1+2\sigma}  \Lp{\partial_x^2 \gamma}^2 \int_\T\left(\frac{|x'|}{\Np{F(\gamma)}}\right)^{2\sigma} \left| K'\left(\frac{|x'|}{\Np{F(\gamma)}}\right) \right| dx',
    \end{align*}
     where we have used the monotonicity assumption of $K'$. Finally, taking for instance $\sigma=\frac{1}{2}$ we get an integral which we already bounded using Lemma \ref{lemma:canviVar} on $I_1$ from the $H^3$ case, so we have
     $$|I_1|\leq \SNa[1/2]{\partial_x\gamma}^2 \Lp{\partial_x^2\gamma}^2 (\Np{F(\gamma)}^3+\Np{F(\gamma)}). $$
         
    After symmetrizing on $I_2$ it is clear that $I_2=-4I_1$ so a similar bound holds. 
    
    We focus now on $I_3=J_1+J_2+J_3+J_4$ where
     \begin{align*}
            J_1&=\intTT\partial_x^2\gamma(x)\cdot \diff{\partial_x \gamma}K''(|\diff{\gamma}|) \frac{(\diff{\partial_x\gamma}\cdot\diff{\gamma})^2}{|\diff{\gamma}|^2}dx'dx,\\
            J_2&=\intTT \partial_ x^2\gamma(x) \cdot \diff{\partial_x\gamma}K'(|\diff{\gamma}|)\frac{\diff{\partial_x^2 \gamma}\cdot\diff{\gamma}}{|\diff{\gamma}|}dx'dx,\\
            J_3&=\intTT \partial_ x^2\gamma(x) \cdot \diff{\partial_x \gamma} K'(|\diff{\gamma}|)\frac{|\diff{\partial_x\gamma}|^2}{|\diff{\gamma}|}dx'dx,\\
            J_4&=-\intTT \partial_ x^2\gamma(x) \cdot \diff{\partial_x \gamma}K'(|\diff{\gamma}|)\frac{(\diff{\partial_x\gamma}\cdot\diff{\gamma})^2}{|\diff{\gamma}|^3}dx'dx.
        \end{align*}

        The following identity, analogous to \eqref{eq:MVTgammaH3} used in the previous section is key 
        \begin{equation}\label{eq:MVTgammaH2}
            \diff{\partial_x\gamma}=x' \int_0^1 \partial_x^2\gamma(x-sx') ds,
        \end{equation}

        with this and recalling that $|K''|$  is decreasing we have 
        \begin{align*}
            |J_1|&\leq \Na[\sigma]{\partial_x\gamma}^2\intT |x'|^{1+2\sigma} \left|K''\left(\frac{|x'|}{\Np{F(\gamma)}}\right)\right| \left(\intT\int_0^1 \partial_x^2\gamma(x) \cdot \partial_x^2\gamma(x-sx')ds dx\right) dx'\\
            &\leq \Na[\sigma]{\partial_x\gamma}^2 \Lp{\partial_x^2 \gamma}^2 \Np{F(\gamma)}^{1+2\sigma} \left|\intT \left(\frac{|x'|}{\Np{F(\gamma)}}\right)^{1+2\sigma} K''\left(\frac{|x'|}{\Np{F(\gamma)}}\right) dx'\right|,
        \end{align*}

       and, choosing again $\sigma=\frac{1}{2}$ and applying Lemma \ref{lemma:canviVar}, we obtain 
        $$|J_1|\leq \Na[1/2]{\partial_x\gamma}^2 \Lp{\partial_x^2 \gamma}^2 (\Np{F(\gamma)}^{3}+1).$$

        By the same reasoning, we derive analogous estimates for $J_3$ and $J_4$. Namely, $$|J_i|\leq \Na[1/2]{\partial_x\gamma}^2 \Lp{\partial_x^2 \gamma}^2 (\Np{F(\gamma)}^{3}+\Np{F(\gamma)}), \; i=3,4.$$
        
        For $J_2$, Notice that by \eqref{eq:extraCancellation} we have 
        $$\diff{\gamma}\cdot \diff{\partial_x^2 \gamma}=(\diff{\gamma}-\partial_x\gamma(x)x')\cdot \diff{\partial_x^2\gamma}-x'\diff{\partial_x\gamma}\cdot \partial_x^2\gamma (x-x')$$
        therefore
        $$\left|\diff{\gamma}\cdot\diff{\partial_x^2\gamma}\right|\leq C\SNa[\sigma]{\partial_x\gamma}|x'|^{1+\sigma}
\left(\left|\diff{\partial_x^2\gamma}\right|+\left|\partial_x^2\gamma(x-x')\right|\right).$$

        and

        \begin{align*}
            |J_2|&\leq \Np{F(\gamma)} \SNa[\sigma]{\partial_x\gamma }^2 \intTT |\partial_x^2\gamma(x)| \frac{|x'|^\sigma}{|x'|} |K'(|\diff{\gamma}|)|| \diff{\partial_x^2 \gamma}\cdot\diff{\gamma}| dx'dx\\
            &\leq \Np{F(\gamma)} \SNa[\sigma]{\partial_x\gamma }^2 \intTT |\partial_x^2\gamma(x)| |x'|^{2\sigma} |K'(|\diff{\gamma}|)| \left(\left|\diff{\partial_x^2\gamma}\right|+\left|\partial_x^2\gamma(x-x')\right|\right)dx'dx\\
            &\leq \Np{F(\gamma)} \SNa[\sigma]{\partial_x\gamma }^2 \intT  |x'|^{2\sigma} \left|K'\left(\frac{|x'|}{\Np{F(\gamma)}}\right)\right| dx'\\ &\qquad\qquad\cdot \intT |\partial_x^2\gamma(x)|\cdot\left(\left|\diff{\partial_x^2\gamma}\right|+\left|\partial_x^2\gamma(x-x')\right|\right)dx\\
            &\leq  \Np{F(\gamma)}^{1+2\sigma} \SNa[\sigma]{\partial_x\gamma }^2 \Lp{\partial^2_x \gamma}^2\intT  \left(\frac{|x'|}{\Np{F(\gamma)}}\right)^{2\sigma} \left|K'\left(\frac{|x'|}{\Np{F(\gamma)}}\right)\right| dx'.
        \end{align*}

        Finally, proceeding as before one gets $$|J_2|\leq \SNa[1/2]{\partial_x\gamma}^2 \Lp{\partial_x^2\gamma}^2 (\Np{F(\gamma)}^3+\Np{F(\gamma)}), $$

        and $$|I_3|\leq C \SNa[1/2]{\partial_x\gamma}^2 \Lp{\partial_x^2\gamma}^2 (\Np{F(\gamma)}^3+\Np{F(\gamma)}+1).$$      
                        
        For $I_4$ we use Cauchy--Schwarz to get $|I_4|\leq C \Np{\partial_x \lambda} \Lp{\partial_x^2 \gamma}^2,$ finally by means of \eqref{eq:controlDerivadaLambda} one gets a desired bound.
        
        Gathering all the $I_j$ estimates and using Sobolev embeddings we obtain 
        $$\frac{d}{dt} \Lp{\partial_x^2 \gamma}^2 \leq \mathcal{P}(\Hd{\gamma},\Np{F(\gamma)})\left(1+\Lp{\partial_x^2 \gamma}^2\right),$$ 
        where $\mathcal{P}$ is a polynomial with positive coefficients.

        It remains to deal with the arc-chord condition in this case. To do so, consider its evolution 
        $$\partial_tF(\gamma)=-\frac{|x'|(\gamma(x)-\gamma(x-x'))}{|\gamma(x)-\gamma(x-x')|^3}\cdot(\partial_t\gamma(x)-\partial_t\gamma(x-x'))=B_1+B_2+B_3+B_4+B_5,$$

         where the terms \(B_i\) come from inserting the right‑hand side of \eqref{eq:CDEmod} for \(\partial_t\gamma\):
        \begin{align*}
            B_1&= -\frac{|x'|\diff{\gamma}}{|\diff{\gamma}|^3}\cdot (\partial_x\gamma(x)-\partial_x\gamma(x-x')) \intT K(|\gamma(x)-\gamma(x-\xi)|)d\xi,\\
            B_2&=-\frac{|x'|\diff{\gamma}}{|\diff{\gamma}|^3}\cdot\intT (\partial_x\gamma(x-\xi)-\partial_x\gamma (x-x'-\xi)) K(|\gamma(x)-\gamma(x-\xi)|)d\xi,\\
            B_3&=-\frac{|x'|\diff{\gamma}}{|\diff{\gamma}|^3}\cdot \intT \diff[x'][x-\xi]{\partial_x\gamma} (K(|\diff[\xi][x]{\gamma}|)-K(|\diff[\xi][x-x']{\gamma}|)) d\xi,\\
            B_4&=-\frac{|x'|\diff{\gamma}}{|\diff{\gamma}|^3}\cdot (\lambda(x)-\lambda(x-x'))\partial_x\gamma(x),\\
            B_5&=-\frac{|x'|\diff{\gamma}}{|\diff{\gamma}|^3}\cdot\lambda(x-x')(\partial_x\gamma(x)-\partial_x\gamma(x-x')).
        \end{align*}

        We now bound each term separately. To that end, we proceed as in the previous estimates. By \eqref{eq:cotaProdDiferencies} and \eqref{eq:controlIntKernelDif} one gets
        \begin{align*}
        |B_1|&\leq C \frac{|x'|^3}{|\gamma(x)-\gamma(x-x')|^3}\SNa[1/2]{\partial_x\gamma}^2  \intT |K(|\gamma(x)-\gamma(x-\xi)|)|d\xi\\ 
        &\leq C F(\gamma) \Np{F(\gamma)}^2 \SNa[1/2]{\partial_x\gamma}^2 \intT |K(|\gamma(x)-\gamma(x-\xi)|)|d\xi\\
        &\leq C F(\gamma) \Np{F(\gamma)}^2 \SNa[1/2]{\partial_x\gamma}^2\left( \Np{F(\gamma)} + 1 + \Np{\partial_x\gamma}^M \right).
        \end{align*}

        We further split $B_2=B_{21}+B_{22}+B_{23}$ where
        \begin{align*}
            B_{21}&=\frac{|x'|(\diff{\gamma}-\partial_x\gamma(x)x')}{|\diff{\gamma}|^3}\cdot\intT (\partial_x\gamma(x-\xi)-\partial_x\gamma(x-x'-\xi))K(|\diff[\xi]{\gamma}|) d\xi, \\
            B_{22}&=\frac{|x'|x'}{|\diff{\gamma}|^3}\intT \diff[\xi]{\partial_x\gamma}\cdot(\partial_x\gamma(x-\xi)-\partial_x\gamma(x-x'-\xi))K(|\diff[\xi]{\gamma}|) d\xi, \\
            B_{23}&=\frac{|x'|x'}{|\diff{\gamma}|^3}\intT \partial_x\gamma(x-\xi)\cdot(\partial_x\gamma(x-\xi)-\partial_x\gamma(x-x'-\xi))K(|\diff[\xi]{\gamma}|) d\xi.
        \end{align*}

        For $B_{21}$ we proceed as before to get
        \begin{align*}
            |B_{21}|&\leq C F(\gamma) \Np{F(\gamma)}^2 \SNa[1/2]{\partial_x \gamma}^2 \intT |K(|\gamma(x)-\gamma(x-\xi)|)| d\xi\\ &\leq C F(\gamma) \Np{F(\gamma)}^2 \SNa[1/2]{\partial_x \gamma}^2 \left( \Np{F(\gamma)} + 1 + \Np{\partial_x\gamma}^M \right).
        \end{align*}

        For $B_{22}$ we first use \eqref{eq:MVTgammaH2} to get 
        \begin{align*}
            |B_{22}|&\leq C F(\gamma) \Np{F(\gamma)}^2 \SNa[\alpha]{\partial_x \gamma} \int_0^1\intT |\xi|^{\alpha}|\partial_x^2\gamma(x-\xi-sx')| |K(|\diff[\xi]{\gamma}|)|d\xi, 
        \end{align*}
        where $\alpha$ is the one from the extra integrability  \eqref{ass:strong-integrability}. Observe now that the remaining integral to be estimated is the same as the one we found for $A_1$, so carrying out the argument in the same way yields

        $$|B_{22}|\leq C F(\gamma) \Np{F(\gamma)}^2 \SNa[\alpha]{\partial_x \gamma} \Lp[p]{\partial_x^2 \gamma} (\Np{F(\gamma)}^{1+\alpha}+1+\Np{\partial_x\gamma}^M),$$

        Finally, using \eqref{eq:cotaProdUniDif} one gets 
        $$ |B_{23}|\leq C F(\gamma) \Np{F(\gamma)}^2 \SNa[1/2]{\partial_x \gamma}^2 \left( \Np{F(\gamma)} + 1 + \Np{\partial_x\gamma}^M \right).$$

        To bound $B_3$ we do as for $I_5$ from section \ref{sec:H3}. That is, we use \eqref{eq:diferenciaKernelsRTI} and the reverse triangular inequality to get 
        \begin{align*}
            |B_3|&\leq C F(\gamma) \Np{F(\gamma)} \SNa[1/2]{\partial_x\gamma}^2 \\&\quad  \cdot\intT |\xi|(|K'(|\gamma(x)-\gamma(x-\xi)|)|+|K'(|\gamma(x-x')-\gamma(x-x'-\xi)|)|) d\xi\\
            &\leq  C F(\gamma) \Np{F(\gamma)} \SNa[1/2]{\partial_x\gamma}^2 (\Np{F(\gamma)}^2+1).
        \end{align*}

        In $B_4$ the mean value inequality yields 
        $$|B_4|\leq F(\gamma) \Np{F(\gamma)} \Np{\partial_x\lambda} \Np{\partial_x\gamma},$$
        and by means of \eqref{eq:controlDerivadaLambda} one gets a bound of the form $F(\gamma)$ times a polynomial. 

        Using \eqref{eq:cotaProdDiferencies} in $B_5$ with $\sigma=1/2$ one gets $$|B_5|\leq C F(\gamma) \Np{F(\gamma)}^2 \Np{\lambda} \SNa[1/2]{\partial_x\gamma}^2,$$ so it remains to bound $\Np{\lambda}.$ To that end see that from \eqref{eq:lambda} we have
        \begin{align*}
            |\lambda(\gamma)|&\leq 2 \intT \left|\frac{\partial_x\gamma(x)}{|\partial_x\gamma(x)|^2} \cdot \intT \diff{\partial_x^2\gamma} K(|\diff{\gamma}|) dx'\right| dx \\&+ 2 \intT \left|\frac{\partial_x\gamma(x)}{|\partial_x\gamma(x)|^2} \cdot \intT \frac{\diff{\partial_x\gamma} \diff{\gamma} \cdot \diff{\partial_ x\gamma}}{|\diff{\gamma}|}K'(|\diff{\gamma}|) dx'\right| dx,
        \end{align*}
        where the first term is as $A_1$ and the second one as $A_2,$ therefore a similar bound to \eqref{eq:controlDerivadaLambda} holds for $\Np{\lambda}.$

        Gathering the previous estimates and by means of Sobolev embeddings (and recalling that $p<2$) we obtain 
        $$\frac{d}{dt}(\Hd{\gamma}+\Np{F(\gamma)})\leq \mathcal{P}_2(\Hd{\gamma}+\Np{F(\gamma)}),$$ 
        with $\mathcal{P}_2$ a polynomial with positive coefficients. Thus, it is possible to integrate the estimate to get a uniform bound for $\Hd{\gamma}+\Np{F(\gamma)}$ for a time $T>0$ sufficiently small depending only on $\Hd{\gamma_0}+\Np{F(\gamma_0)}$, as done for the $H^3$ case in \eqref{eq:integralLocalExistenceH3}. By an approximation argument, as in section \ref{sec:H3}, those a priori energy estimates provide the existence result, which concludes the proof of Theorem \ref{thm:H2}.

        \appendix

\section{Further examples of admissible kernels}\label{app:examples}
In this appendix we discuss some additional examples of radial kernels satisfying the assumptions used in the paper. We first recall a useful criterion based on complete monotonicity, which makes the monotonicity assumptions on the derivatives essentially automatic. This criterion is inspired by the approach of \cite{hmidi2023unifiedtheoryvstatesstructures}, where complete monotonicity is assumed. Here we observe that this hypothesis automatically implies the monotonicity assumptions in our framework, allowing it to serve as a convenient tool for verifying our hypotheses for a broader class of kernels. We then apply this criterion to a screened version of the gSQG kernel.

\subsection{Completely monotone kernels}

We recall that a function \(f:(0,\infty)\to\mathbb{R}\) is said to be completely monotone if
\(f\in C^\infty((0,\infty))\) and
\[
    (-1)^m f^{(m)}(r)\ge 0,
    \qquad r>0,\quad m=0,1,2,\ldots.
\]
By Bernstein's theorem, this is equivalent to the existence of a unique nonnegative Borel measure
\(\mu\) on \([0,\infty)\) such that
\[
    f(r)=\int_{[0,\infty)} e^{-rs}\,d\mu(s),
    \qquad r>0.
\]
We refer to \cite[Chapter 1]{schilling2010bernstein} for a systematic account of completely monotone and
Bernstein functions.

Complete monotonicity provides a convenient criterion for verifying our assumptions. If
\(-K'\) is completely monotone, then
\[
    (-1)^j K^{(j)}(r)\ge 0,
    \qquad r>0,\quad j\ge 1.
\]
Consequently,
\[
    |K^{(j)}(r)|=(-1)^jK^{(j)}(r),
    \qquad
    \frac{d}{dr}|K^{(j)}(r)|
    =
    (-1)^jK^{(j+1)}(r)\le0,
\]
so \(r\mapsto |K^{(j)}(r)|\) is nonincreasing for every \(j\ge1\). Thus the monotonicity assumption
\ref{ass:monotonicity} is automatically satisfied. If, in addition, \(K\) itself is completely monotone, then \(K\) is nonnegative and nonincreasing, and the growth condition in \ref{ass:compKernel} is automatic: for any fixed \(\delta>0\), one has \(K(r)\le K(\delta)\) for \(r\ge\delta\). Hence, for completely monotone kernels, the only remaining condition to check is the local integrability near the origin.

The classical kernels discussed in the introduction fit naturally into this framework. For the Euler kernel,
\(K(r)=-(2\pi)^{-1}\log r\), the function \(-K'(r)=(2\pi r)^{-1}\) is completely monotone. For the gSQG kernel \(K(r)=c_\beta r^{-\beta}\), \(0<\beta<1\), the kernel itself is completely monotone and is locally integrable at the origin. For the QGSW kernel, complete monotonicity follows from the standard integral representation of the modified Bessel function \(\mathbf{K}_0\). See \cite{hmidi2023unifiedtheoryvstatesstructures} for a detailed treatment of the classical models.

We now list some further admissible examples. In the first three cases, the stronger integrability condition of Theorem~\ref{thm:H2} is also satisfied, and therefore both Theorem~\ref{thm:H3} and Theorem~\ref{thm:H2} apply. The last example satisfies the assumptions of Theorem~\ref{thm:H3}, but not the stronger condition required in Theorem~\ref{thm:H2}.

\begin{itemize}
    \item Let \(0<a<1\) and \(b>0\). The kernels
    \[
        K(r)=r^{-a}e^{-r},
        \qquad
        K(r)=r^{-a}(1+r)^{-b}
    \]
    are completely monotone. Indeed, \(r^{-a}\), \(e^{-r}\), and \((1+r)^{-b}\) are completely monotone, and products of completely monotone functions are completely monotone. In both cases,
    \[
        K(r)\sim r^{-a}
        \qquad\text{as }r\to0^+.
    \]
    Hence \(K\in L^1(0,1)\) precisely when \(a<1\). Moreover, the stronger condition in Theorem~\ref{thm:H2} also holds, since one can choose
    \[
        \alpha\in\left(\frac{a}{2},\frac12\right).
    \]

    \item Let \(0\le\nu<1\). The kernel
    \[
        K(r)=\mathbf{K}_{\nu}(r),
    \]
    where \(\mathbf{K}_{\nu}\) denotes the modified Bessel function of the second kind, is completely monotone on \((0,\infty)\). Moreover,
    \[
        \mathbf{K}_{0}(r)\sim -\log r,
        \qquad
        \mathbf{K}_{\nu}(r)\sim C_\nu r^{-\nu}
        \quad\text{if }0<\nu<1,
        \qquad r\to0^+.
    \]
    Hence \(\mathbf{K}_{\nu}\in L^1(0,1)\) for \(0\le\nu<1\). The stronger condition in Theorem~\ref{thm:H2} is also satisfied: for \(0<\nu<1\), choose
    \[
        \alpha\in\left(\frac{\nu}{2},\frac12\right),
    \]
    while the case \(\nu=0\) follows from the logarithmic singularity.

    \item Let
    \[
        0\le b\le1,
        \qquad
        b\le a<b+1.
    \]
    Then
    \[
        K(r)=r^{-a}\bigl(\log(1+r)\bigr)^b
    \]
    is completely monotone. To see this, write
    \[
        K(r)
        =
        r^{-(a-b)}
        \left(\frac{\log(1+r)}{r}\right)^b.
    \]
    The factor \(r^{-(a-b)}\) is completely monotone because \(a-b\ge0\). On the other hand,
    \[
        \frac{\log(1+r)}{r}
        =
        \int_0^1 \frac{1}{1+rs}\,ds,
    \]
    and hence it is completely monotone. Since \(0\le b\le1\), its power
    \[
        \left(\frac{\log(1+r)}{r}\right)^b
    \]
    is also completely monotone. Therefore \(K\) is completely monotone as a product of completely monotone functions.

    Finally, since
    \[
        \log(1+r)\sim r
        \qquad\text{as }r\to0^+,
    \]
    one has
    \[
        K(r)\sim r^{b-a}
        \qquad\text{as }r\to0^+.
    \]
    Thus
    \[
        \int_0^1 K(r)\,dr<\infty
    \]
    if and only if \(b-a>-1\), that is, \(a<b+1\), which is exactly the upper restriction above. The stronger condition in Theorem~\ref{thm:H2} also holds. Indeed, if \(a\le b\), the kernel is bounded near the origin; if \(a>b\), one can choose
    \[
        \alpha\in\left(\frac{a-b}{2},\frac12\right),
    \]
    which is possible because \(a-b<1\).

    \item Let \(p>1\) and define
    \[
        K_p(r)=\int_e^\infty e^{-rt}\frac{dt}{(\log t)^p}.
    \]
    Then \(K_p\) is completely monotone by Bernstein's theorem. Moreover,
    \[
        K_p(r)\sim \frac{1}{r(\log(1/r))^p},
        \qquad r\to0^+.
    \]
    Hence \(K_p\in L^1(0,1)\) if and only if \(p>1\), so the basic integrability assumption holds. On the other hand, for every \(\alpha\in(0,1/2)\),
    \[
        rK_p(r)^{1/\alpha}
        \sim
        r^{1-\frac1\alpha}
        (\log(1/r))^{-\frac{p}{\alpha}},
        \qquad r\to0^+.
    \]
    Since \(\alpha<1/2\), one has \(1-\frac1\alpha<-1\), and the last expression is not integrable at zero. Therefore \(K_p\) satisfies the assumptions of Theorem~\ref{thm:H3}, but not the stronger integrability condition required in Theorem~\ref{thm:H2}.
\end{itemize}

\subsection{A screened gSQG kernel}

Let \(0<\beta<1\) and \(\varepsilon>0\). Consider the active scalar equation
\[
    \partial_t\omega+v\cdot\nabla\omega=0,
    \qquad
    v=\nabla^\perp(\varepsilon^2-\Delta)^{-1+\frac{\beta}{2}}\omega .
\]
Equivalently, the stream function is given by
\[
    \psi=(\varepsilon^2-\Delta)^{-1+\frac{\beta}{2}}\omega .
\]
This may be viewed as a screened version of the gSQG equation, in the same spirit in which the QGSW equation is obtained from the Euler kernel by replacing \((-\Delta)^{-1}\) with \((\varepsilon^2-\Delta)^{-1}\). Here, the replacement of \(-\Delta\) by \(\varepsilon^2-\Delta\) preserves the gSQG-type singularity at small scales, while introducing exponential decay at large distances.

We now identify the corresponding radial kernel. We use the Fourier convention
\[
    \widehat f(\xi)=\int_{\mathbb{R}^2}e^{-ix\cdot\xi}f(x)\,dx,
    \qquad
    f(x)=\frac{1}{(2\pi)^2}\int_{\mathbb{R}^2}e^{ix\cdot\xi}\widehat f(\xi)\,d\xi .
\]
The operator
\[
    (\varepsilon^2-\Delta)^{-1+\frac{\beta}{2}}
\]
has multiplier
\[
    (\varepsilon^2+|\xi|^2)^{-1+\frac{\beta}{2}}.
\]
Setting
\[
    \nu=1-\frac{\beta}{2},
\]
the associated kernel is the inverse Fourier transform of
\[
    (\varepsilon^2+|\xi|^2)^{-\nu}.
\]
Since the multiplier is radial, the kernel is radial as well. Using polar coordinates in Fourier space and the standard Hankel transform formula for Bessel potentials, one obtains
\[
    G_{\varepsilon,\beta}(r)
    =
    c_\beta\,
    \varepsilon^{\frac{\beta}{2}}
    r^{-\frac{\beta}{2}}
    \mathbf{K}_{\frac{\beta}{2}}(\varepsilon r),
    \qquad r>0,
\]
where \(\mathbf{K}_{\alpha}\) denotes the modified Bessel function of the second kind and, with the above convention,
\[
    c_\beta
    =
    \frac{2^{\frac{\beta}{2}}}
    {2\pi\,\Gamma(1-\frac{\beta}{2})}.
\]
Thus the stream kernel has the form
\[
    K(x,y)=G_{\varepsilon,\beta}(|x-y|).
\]
We refer to \cite[Chapter 9]{abramowitz1988handbook} for the classical identities for modified Bessel functions; see also \cite{watson1922treatise}.

The standard asymptotics of \(\mathbf{K}_{\alpha}\) imply
\[
    G_{\varepsilon,\beta}(r)\sim C_\beta r^{-\beta},
    \qquad r\to0^+,
\]
and
\[
    G_{\varepsilon,\beta}(r)
    =
    O\left(
        r^{-\frac{\beta+1}{2}}e^{-\varepsilon r}
    \right),
    \qquad r\to+\infty.
\]
Hence the screened kernel has the same local singularity as the usual gSQG kernel with parameter \(\beta\), but it is exponentially screened at infinity.

It follows immediately that
\[
    \int_0^1 G_{\varepsilon,\beta}(r)\,dr<\infty
\]
precisely in the range \(0<\beta<1\). Moreover, the stronger integrability condition in Theorem~\ref{thm:H2} is also satisfied: near the origin,
\[
    r\,G_{\varepsilon,\beta}(r)^{1/\alpha}
    \sim
    C r^{1-\frac{\beta}{\alpha}},
\]
which is integrable at zero whenever
\[
    \alpha>\frac{\beta}{2}.
\]
Since \(0<\beta<1\), one can choose
\[
    \alpha\in\left(\frac{\beta}{2},\frac12\right).
\]

Finally, this kernel is completely monotone since it is the product of completely monotone functions. Therefore \(G_{\varepsilon,\beta}\) satisfies the monotonicity and growth assumptions by the criterion of the previous subsection. Consequently, the screened gSQG kernels are covered by both Theorem~\ref{thm:H3} and Theorem~\ref{thm:H2}.

        \section*{Funding and Acknowledgments}
        The author is deeply grateful to Professor Taoufik Hmidi for his hospitality during a research visit to NYU Abu Dhabi. The problem studied in this paper originated during that visit, and many valuable discussions with him shaped the formulation of the problem, the assumptions on the kernel, and the verification of several technical estimates.
        
        The author also wishes to thank Professors Joan Mateu and Joan Orobitg for their invaluable support, encouragement, and insightful guidance throughout this research.
        
        This research is supported by the projects ``Análisis y Ecuaciones en Derivadas Parciales'' (Refs. PID2020-112881GB-I00 and PID2024-155320NB-I00) of the Spanish Ministry of Science, Innovation and Universities.
        
        \bibliographystyle{plain} 
        \bibliography{bibTesi}  

@book{MajdaBertozzi,
    AUTHOR = {Majda, Andrew J. and Bertozzi, Andrea L.},
    TITLE = {Vorticity and incompressible flow},
    SERIES = {Cambridge Texts in Applied Mathematics},
    VOLUME = {27},
    PUBLISHER = {Cambridge University Press, Cambridge},
    YEAR = {2002},
    PAGES = {xii+545},
}

@article{cantero2021regularity,
    AUTHOR = {Cantero, Juan Carlos and Mateu, Joan and Orobitg, Joan and
              Verdera, Joan},
    TITLE = {The regularity of the boundary of vortex patches for some
              nonlinear transport equations},
   JOURNAL = {Analysis \& PDE},
   VOLUME = {16},
   YEAR = {2023},
   NUMBER = {7},
   PAGES = {1621--1650},
   DOI = {10.2140/apde.2023.16.1621},
}

@article{bertozzi2016regularity,
  title={The regularity of the boundary of a multidimensional aggregation patch},
  author={Bertozzi, Andrea L. and Garnett, John B. and Laurent, Thomas and Verdera, Joan},
  journal={SIAM Journal on Mathematical Analysis},
  volume={48},
  number={6},
  pages={3789--3819},
  year={2016},
  publisher={SIAM},
  DOI = {10.1137/15M1033125},
}

@article{bertozzi2012aggregation,
   AUTHOR = {Bertozzi, Andrea L. and Laurent, Thomas and L\'eger, Flavien},
     TITLE = {Aggregation and spreading via the {N}ewtonian potential: the
              dynamics of patch solutions},
   JOURNAL = {Math. Models Methods Appl. Sci.},
  FJOURNAL = {Mathematical Models and Methods in Applied Sciences},
    VOLUME = {22},
      YEAR = {2012},
     PAGES = {1140005, 39},
       DOI = {10.1142/S0218202511400057},
}

@book{abramowitz1988handbook,
  TITLE = {Handbook of mathematical functions with formulas, graphs, and
              mathematical tables},
    EDITOR = {Abramowitz, Milton and Stegun, Irene A.},
      NOTE = {Reprint of the 1972 edition},
 PUBLISHER = {Dover Publications, Inc., New York},
      YEAR = {1992},
     PAGES = {xiv+1046},
      ISBN = {0-486-61272-4},
   MRCLASS = {00A20 (00A22 33-00)},
  MRNUMBER = {1225604},
}

@article{chemin1993persistance,
    AUTHOR = {Chemin, Jean-Yves},
     TITLE = {Persistance de structures g\'eom\'etriques dans les fluides
              incompressibles bidimensionnels},
   JOURNAL = {Ann. Sci. \'Ecole Norm. Sup. (4)},
  FJOURNAL = {Annales Scientifiques de l'\'Ecole Normale Sup\'erieure.
              Quatri\`eme S\'erie},
    VOLUME = {26},
      YEAR = {1993},
    NUMBER = {4},
     PAGES = {517--542},
      ISSN = {0012-9593},
   MRCLASS = {35Q35 (76C05)},
  MRNUMBER = {1235440},
MRREVIEWER = {Paolo\ Secchi},
       URL = {http://www.numdam.org/item?id=ASENS_1993_4_26_4_517_0},
}

@article{bertozzi1993global,
  AUTHOR = {Bertozzi, Andrea L. and Constantin, Peter},
     TITLE = {Global regularity for vortex patches},
   JOURNAL = {Comm. Math. Phys.},
  FJOURNAL = {Communications in Mathematical Physics},
    VOLUME = {152},
      YEAR = {1993},
    NUMBER = {1},
     PAGES = {19--28},
      ISSN = {0010-3616,1432-0916},
   MRCLASS = {35Q35 (76C99)},
  MRNUMBER = {1207667},
MRREVIEWER = {A.\ Elcrat},
       URL = {http://projecteuclid.org/euclid.cmp/1104252307},
}

@book{vallis2017atmospheric,
  title={Atmospheric and oceanic fluid dynamics: fundamentals and Large-Scale Circulation},
  author={Vallis, Geoffrey K.},
  year={2017},
  publisher={Cambridge University Press},
  ISBN={9781107588417},
  DOI={10.1017/9781107588417},
  PAGES = {xviii+946},
}

@book{watson1922treatise,
   AUTHOR = {Watson, George N.},
     TITLE = {A {T}reatise on the {T}heory of {B}essel {F}unctions},
 PUBLISHER = {Cambridge University Press, Cambridge; The Macmillan Company,
              New York},
      YEAR = {1944},
     PAGES = {vi+804},
   MRCLASS = {33.0X},
  MRNUMBER = {10746},
MRREVIEWER = {G.\ Szeg\"o},
}

@article {sQG95,
    AUTHOR = {Held, Isaac M. and Pierrehumbert, Raymond T. and Garner,
              Stephen T. and Swanson, Kyle L.},
     TITLE = {Surface quasi-geostrophic dynamics},
   JOURNAL = {J. Fluid Mech.},
  FJOURNAL = {Journal of Fluid Mechanics},
    VOLUME = {282},
      YEAR = {1995},
     PAGES = {1--20},
      ISSN = {0022-1120,1469-7645},
   MRCLASS = {76C05 (76F99 76U05 86A10)},
  MRNUMBER = {1312238},
       DOI = {10.1017/S0022112095000012},
       URL = {https://doi.org/10.1017/S0022112095000012},
}

@article {SQG2002,
    AUTHOR = {Lapeyre, Guillaume and Klein, Patrice},
     TITLE = {Dynamics of the upper oceanic layers in terms of surface
              quasigeostrophy theory},
   JOURNAL = {J. Phys. Oceanogr.},
  FJOURNAL = {Journal of Physical Oceanography},
    VOLUME = {36},
      YEAR = {2006},
    NUMBER = {2},
     PAGES = {165--176},
      ISSN = {0022-3670,1520-0485},
   MRCLASS = {86A10 (76B99)},
  MRNUMBER = {2214777},
       DOI = {10.1175/JPO2840.1},
       URL = {https://doi.org/10.1175/JPO2840.1},
}

@article {CordobaRodrigoaSQG,
    AUTHOR = {C\'ordoba, Diego and Fontelos, Marco A. and Mancho, Ana M. and
              Rodrigo, Jose L.},
     TITLE = {Evidence of singularities for a family of contour dynamics
              equations},
   JOURNAL = {Proc. Natl. Acad. Sci. USA},
  FJOURNAL = {Proceedings of the National Academy of Sciences of the United
              States of America},
    VOLUME = {102},
      YEAR = {2005},
    NUMBER = {17},
     PAGES = {5949--5952},
      ISSN = {0027-8424,1091-6490},
   MRCLASS = {76B03 (35A20 35Q35)},
  MRNUMBER = {2141918},
       DOI = {10.1073/pnas.0501977102},
       URL = {https://doi.org/10.1073/pnas.0501977102},
}

@article {depauw1999poche,
    AUTHOR = {Depauw, Nicolas},
     TITLE = {Poche de tourbillon pour {E}uler 2{D} dans un ouvert \`a{}
              bord},
   JOURNAL = {J. Math. Pures Appl. (9)},
  FJOURNAL = {Journal de Math\'ematiques Pures et Appliqu\'ees. Neuvi\`eme
              S\'erie},
    VOLUME = {78},
      YEAR = {1999},
    NUMBER = {3},
     PAGES = {313--351},
      ISSN = {0021-7824},
   MRCLASS = {76B47 (35Q30 76B03)},
  MRNUMBER = {1687165},
MRREVIEWER = {Marcel\ Oliver},
       DOI = {10.1016/S0021-7824(98)00003-8},
       URL = {https://doi.org/10.1016/S0021-7824(98)00003-8},
}

@article {kiselev2019global,
    AUTHOR = {Kiselev, Alexander and Li, Chao},
     TITLE = {Global regularity and fast small-scale formation for {E}uler
              patch equation in a smooth domain},
   JOURNAL = {Comm. Partial Differential Equations},
  FJOURNAL = {Communications in Partial Differential Equations},
    VOLUME = {44},
      YEAR = {2019},
    NUMBER = {4},
     PAGES = {279--308},
      ISSN = {0360-5302,1532-4133},
   MRCLASS = {35Q31 (35B65)},
  MRNUMBER = {3941226},
MRREVIEWER = {Francesco\ Fanelli},
       DOI = {10.1080/03605302.2018.1546318},
       URL = {https://doi.org/10.1080/03605302.2018.1546318},
}

@article {gancedo2008SQG,
    AUTHOR = {Gancedo, Francisco},
     TITLE = {Existence for the {$\alpha$}-patch model and the {QG} sharp
              front in {S}obolev spaces},
   JOURNAL = {Adv. Math.},
  FJOURNAL = {Advances in Mathematics},
    VOLUME = {217},
      YEAR = {2008},
    NUMBER = {6},
     PAGES = {2569--2598},
      ISSN = {0001-8708,1090-2082},
   MRCLASS = {35Q35 (76B03 76U05)},
  MRNUMBER = {2397460},
MRREVIEWER = {Milton\ C.\ Lopes Filho},
       DOI = {10.1016/j.aim.2007.10.010},
       URL = {https://doi.org/10.1016/j.aim.2007.10.010},
}

@article {chae2012generalized,
    AUTHOR = {Chae, Dongho and Constantin, Peter and C\'ordoba, Diego and
              Gancedo, Francisco and Wu, Jiahong},
     TITLE = {Generalized surface quasi-geostrophic equations with singular
              velocities},
   JOURNAL = {Comm. Pure Appl. Math.},
  FJOURNAL = {Communications on Pure and Applied Mathematics},
    VOLUME = {65},
      YEAR = {2012},
    NUMBER = {8},
     PAGES = {1037--1066},
      ISSN = {0010-3640,1097-0312},
   MRCLASS = {35Q86 (35A01 35A02 35A09 35D30 86A04)},
  MRNUMBER = {2928091},
MRREVIEWER = {Animikh\ Biswas},
       DOI = {10.1002/cpa.21390},
       URL = {https://doi.org/10.1002/cpa.21390},
}

@article {gancedo2021local,
    AUTHOR = {Gancedo, Francisco and Patel, Neel},
     TITLE = {On the local existence and blow-up for generalized {SQG}
              patches},
   JOURNAL = {Ann. PDE},
  FJOURNAL = {Annals of PDE. Journal Dedicated to the Analysis of Problems
              from Physical Sciences},
    VOLUME = {7},
      YEAR = {2021},
    NUMBER = {1},
     PAGES = {Paper No. 4, 63},
      ISSN = {2524-5317,2199-2576},
   MRCLASS = {35Q31 (35Q35 37G40)},
  MRNUMBER = {4235799},
       DOI = {10.1007/s40818-021-00095-1},
       URL = {https://doi.org/10.1007/s40818-021-00095-1},
}

@article {rodrigo2005,
    AUTHOR = {Rodrigo, Jos\'e{} L.},
     TITLE = {On the evolution of sharp fronts for the quasi-geostrophic
              equation},
   JOURNAL = {Comm. Pure Appl. Math.},
  FJOURNAL = {Communications on Pure and Applied Mathematics},
    VOLUME = {58},
      YEAR = {2005},
    NUMBER = {6},
     PAGES = {821--866},
      ISSN = {0010-3640,1097-0312},
   MRCLASS = {35Q35 (35B10 76B03 76B47 76U05)},
  MRNUMBER = {2142632},
MRREVIEWER = {Drago\c s\ Iftimie},
       DOI = {10.1002/cpa.20059},
       URL = {https://doi.org/10.1002/cpa.20059},
}

@article {clop2022nonlinear,
    AUTHOR = {Clop, Albert and Sengupta, Banhirup},
     TITLE = {Nonlinear transport equations and quasiconformal maps},
   JOURNAL = {Ann. Fenn. Math.},
  FJOURNAL = {Annales Fennici Mathematici},
    VOLUME = {48},
      YEAR = {2023},
    NUMBER = {1},
     PAGES = {375--387},
      ISSN = {2737-0690,2737-114X},
   MRCLASS = {35Q49 (30C62 35F25)},
  MRNUMBER = {4603578},
       DOI = {10.54330/afm.130026},
       URL = {https://doi.org/10.54330/afm.130026},
}

@article{yudovich,
author = {Yudovich, Viktor I.},
year = {1963},
month = {01},
pages = {},
title = {Some bounds for solutions of elliptic equations},
volume = {56},
journal = {Translations. Series 2. American Mathematical Society.}
}

@article{kiselevLuo2022,
  title={Illposedness of ${C}^2$ Vortex Patches},
  author={Kiselev, Alexander and Luo, Xiaoyutao},
  journal={Archive for Rational Mechanics and Analysis},
  volume={247},
  number={3},
  pages={57},
  year={2023},
  publisher={Springer}
}

@article {hmidi2023unifiedtheoryvstatesstructures,
    AUTHOR = {Hmidi, Taoufik and Xue, Liutang and Xue, Zhilong},
     TITLE = {Unified theory on {V}-states structures for active scalar
              equations},
   JOURNAL = {Adv. Math.},
  FJOURNAL = {Advances in Mathematics},
    VOLUME = {486},
      YEAR = {2026},
     PAGES = {Paper No. 110750, 77},
      ISSN = {0001-8708,1090-2082},
   MRCLASS = {35Q35 (35B32 35P30 35Q86 76U05)},
  MRNUMBER = {5007819},
       DOI = {10.1016/j.aim.2025.110750},
       URL = {https://doi.org/10.1016/j.aim.2025.110750},
}

@article{magana2026qgsw,
  author  = {Magaña, Marc and Mateu, Joan and Orobitg, Joan},
  title   = {The regularity of the boundary of vortex patches for the quasi-geostrophic shallow-water equations},
  journal = {arXiv preprint arXiv:2602.22767},
  year    = {2026},
}

@article{TXX2025,
      title={A revisit of patch solutions for the 2{D} {L}oglog-{E}uler type equation}, 
      author={Changhui Tan and Liutang Xue and Zhilong Xue},
      year={2025},
      eprint={2510.18759},
      archivePrefix={arXiv},
      primaryClass={math.AP},
      url={https://arxiv.org/abs/2510.18759},
      journal={arXiv preprint arXiv:2510.18759}
}

@article{VerderaRevisited,
    AUTHOR = {Verdera, Joan},
     TITLE = {The global regularity of vortex patches revisited},
   JOURNAL = {Adv. Math.},
  FJOURNAL = {Advances in Mathematics},
    VOLUME = {416},
      YEAR = {2023},
     PAGES = {Paper No. 108917, 15},
      ISSN = {0001-8708,1090-2082},
   MRCLASS = {35Q31 (35Q35 35Q49 42B20)},
  MRNUMBER = {4549427},
       DOI = {10.1016/j.aim.2023.108917},
       URL = {https://doi.org/10.1016/j.aim.2023.108917},
}

@article {CDE,
    AUTHOR = {Constantin, Peter and Drivas, Theodore D. and Elgindi, Tarek
              M.},
     TITLE = {Inviscid limit of vorticity distributions in the {Y}udovich
              class},
   JOURNAL = {Comm. Pure Appl. Math.},
  FJOURNAL = {Communications on Pure and Applied Mathematics},
    VOLUME = {75},
      YEAR = {2022},
    NUMBER = {1},
     PAGES = {60--82},
      ISSN = {0010-3640},
   MRCLASS = {76D09 (76D17)},
  MRNUMBER = {4373167},
       DOI = {10.1002/cpa.21940},
       URL = {https://doi.org/10.1002/cpa.21940},
}

@book{as,
	author = {Ars\'{e}nio, Diogo and Saint-Raymond, Laure},
	pages = {xii+406},
	publisher = {European Mathematical Society (EMS), Z\"{u}rich},
	series = {EMS Monographs in Mathematics},
	title = {From the {V}lasov-{M}axwell-{B}oltzmann system to incompressible viscous electro-magneto-hydrodynamics. {V}ol. 1},
	year = {2019}}

@article{MiaoTanXueXue2024,
      title={Local regularity and finite-time singularity for a class of generalized {SQG} patches on the half plane}, 
      author={Qianyun Miao and Changhui Tan and Liutang Xue and Zhilong Xue},
      year={2024},
      eprint={2410.19273},
      archivePrefix={arXiv},
      primaryClass={math.AP},
      journal={arXiv preprint arXiv:2410.19273}
}

@article{kiselevLuoAlphaSQG,
   AUTHOR = {Kiselev, Alexander and Luo, Xiaoyutao},
     TITLE = {The {$\alpha$}-{SQG} patch problem is illposed in
              {$C^{2,\beta}$} and {$W^{2,p}$}},
   JOURNAL = {Comm. Pure Appl. Math.},
  FJOURNAL = {Communications on Pure and Applied Mathematics},
    VOLUME = {78},
      YEAR = {2025},
    NUMBER = {4},
     PAGES = {742--820},
      ISSN = {0010-3640,1097-0312},
   MRCLASS = {35R25 (35Q35 35Q86)},
  MRNUMBER = {4863200},
MRREVIEWER = {Jiefeng\ Zhao},
       DOI = {10.1002/cpa.22236},
       URL = {https://doi.org/10.1002/cpa.22236},
}

@article {GancedoUniquenessSQG,
    AUTHOR = {C\'ordoba, Antonio and C\'ordoba, Diego and Gancedo,
              Francisco},
     TITLE = {Uniqueness for {SQG} patch solutions},
   JOURNAL = {Trans. Amer. Math. Soc. Ser. B},
  FJOURNAL = {Transactions of the American Mathematical Society. Series B},
    VOLUME = {5},
      YEAR = {2018},
     PAGES = {1--31},
      ISSN = {2330-0000},
   MRCLASS = {86A10 (35Q35 35Q86)},
  MRNUMBER = {3748149},
MRREVIEWER = {Mikhail\ M.\ Shvartsman},
       DOI = {10.1090/btran/20},
       URL = {https://doi.org/10.1090/btran/20},
}

@article {GNP22,
    AUTHOR = {Gancedo, Francisco and Nguyen, Huy Q. and Patel, Neel},
     TITLE = {Well-posedness for {SQG} sharp fronts with unbounded
              curvature},
   JOURNAL = {Math. Models Methods Appl. Sci.},
  FJOURNAL = {Mathematical Models and Methods in Applied Sciences},
    VOLUME = {32},
      YEAR = {2022},
    NUMBER = {13},
     PAGES = {2551--2599},
      ISSN = {0218-2025,1793-6314},
   MRCLASS = {35Q35 (76U60)},
  MRNUMBER = {4535545},
MRREVIEWER = {Weiliang\ Xiao},
       DOI = {10.1142/S0218202522500610},
       URL = {https://doi.org/10.1142/S0218202522500610},
}

@article {GS14,
    AUTHOR = {Gancedo, Francisco and Strain, Robert M.},
     TITLE = {Absence of splash singularities for surface quasi-geostrophic
              sharp fronts and the {M}uskat problem},
   JOURNAL = {Proc. Natl. Acad. Sci. USA},
  FJOURNAL = {Proceedings of the National Academy of Sciences of the United
              States of America},
    VOLUME = {111},
      YEAR = {2014},
    NUMBER = {2},
     PAGES = {635--639},
      ISSN = {0027-8424,1091-6490},
   MRCLASS = {76S05 (35B35 76D27 76Txx 86A05)},
  MRNUMBER = {3181769},
MRREVIEWER = {Jos\'e\ Miguel\ Pacheco Castelao},
       DOI = {10.1073/pnas.1320554111},
       URL = {https://doi.org/10.1073/pnas.1320554111},
}

@article {KLSplash23,
    AUTHOR = {Kiselev, Alexander and Luo, Xiaoyutao},
     TITLE = {On nonexistence of splash singularities for the
              {$\alpha$}-{SQG} patches},
   JOURNAL = {J. Nonlinear Sci.},
  FJOURNAL = {Journal of Nonlinear Science},
    VOLUME = {33},
      YEAR = {2023},
    NUMBER = {2},
     PAGES = {Paper No. 37, 16},
      ISSN = {0938-8974,1432-1467},
   MRCLASS = {35Q86},
  MRNUMBER = {4554076},
       DOI = {10.1007/s00332-023-09893-2},
       URL = {https://doi.org/10.1007/s00332-023-09893-2},
}

@article {MR4993809,
    AUTHOR = {Zlato{\v s}, Andrej},
     TITLE = {Local regularity and finite time singularity for the
              generalized {SQG} equation on the half-plane},
   JOURNAL = {Duke Math. J.},
  FJOURNAL = {Duke Mathematical Journal},
    VOLUME = {174},
      YEAR = {2025},
    NUMBER = {17},
     PAGES = {3493--3533},
      ISSN = {0012-7094,1547-7398},
   MRCLASS = {35Q35 (76B03)},
  MRNUMBER = {4993809},
       DOI = {10.1215/00127094-2025-0024},
       URL = {https://doi.org/10.1215/00127094-2025-0024},
}

@article {MR4736524,
    AUTHOR = {Jeon, Junekey and Zlato{\v s}, Andrej},
     TITLE = {An improved regularity criterion and absence of splash-like
              singularities for g-{SQG} patches},
   JOURNAL = {Anal. PDE},
  FJOURNAL = {Analysis \& PDE},
    VOLUME = {17},
      YEAR = {2024},
    NUMBER = {3},
     PAGES = {1005--1018},
      ISSN = {2157-5045,1948-206X},
   MRCLASS = {35Q35 (35B65 35Q31 35Q86)},
  MRNUMBER = {4736524},
MRREVIEWER = {Paolo\ Secchi},
       DOI = {10.2140/apde.2024.17.1005},
       URL = {https://doi.org/10.2140/apde.2024.17.1005},
}

@article {MR3666567,
    AUTHOR = {Kiselev, Alexander and Yao, Yao and Zlato{\v s}, Andrej},
     TITLE = {Local regularity for the modified {SQG} patch equation},
   JOURNAL = {Comm. Pure Appl. Math.},
  FJOURNAL = {Communications on Pure and Applied Mathematics},
    VOLUME = {70},
      YEAR = {2017},
    NUMBER = {7},
     PAGES = {1253--1315},
      ISSN = {0010-3640,1097-0312},
   MRCLASS = {35Q35 (35B65 76B03)},
  MRNUMBER = {3666567},
MRREVIEWER = {V\'aclav\ M\'acha},
       DOI = {10.1002/cpa.21677},
       URL = {https://doi.org/10.1002/cpa.21677},
}

@article {MR3549626,
    AUTHOR = {Kiselev, Alexander and Ryzhik, Lenya and Yao, Yao and Zlato{\v s}, Andrej},
     TITLE = {Finite time singularity for the modified {SQG} patch equation},
   JOURNAL = {Ann. of Math. (2)},
  FJOURNAL = {Annals of Mathematics. Second Series},
    VOLUME = {184},
      YEAR = {2016},
    NUMBER = {3},
     PAGES = {909--948},
      ISSN = {0003-486X,1939-8980},
   MRCLASS = {35Q86 (35B65 35Q31 76B03)},
  MRNUMBER = {3549626},
MRREVIEWER = {Gabriela\ Planas},
       DOI = {10.4007/annals.2016.184.3.7},
       URL = {https://doi.org/10.4007/annals.2016.184.3.7},
}

@article {MR3158812,
    AUTHOR = {Elgindi, Tarek Mohamed},
     TITLE = {Osgood's lemma and some results for the slightly supercritical
              2{D} {E}uler equations for incompressible flow},
   JOURNAL = {Arch. Ration. Mech. Anal.},
  FJOURNAL = {Archive for Rational Mechanics and Analysis},
    VOLUME = {211},
      YEAR = {2014},
    NUMBER = {3},
     PAGES = {965--990},
      ISSN = {0003-9527,1432-0673},
   MRCLASS = {35Q31 (35B35 76B03)},
  MRNUMBER = {3158812},
MRREVIEWER = {Raphael\ Stuhlmeier},
       DOI = {10.1007/s00205-013-0691-z},
       URL = {https://doi.org/10.1007/s00205-013-0691-z},
}

@book{schilling2010bernstein,
  author    = {Schilling, Ren{\'e} L. and Song, Renming and Vondra{\v{c}}ek, Zoran},
  title     = {Bernstein Functions: Theory and Applications},
  series    = {De Gruyter Studies in Mathematics},
  volume    = {37},
  publisher = {De Gruyter},
  address   = {Berlin},
  year      = {2010}
}

@article {danchin1997singular,
    AUTHOR = {Danchin, Rapha\"el},
     TITLE = {\'Evolution temporelle d'une poche de tourbillon singuli\`ere},
   JOURNAL = {Comm. Partial Differential Equations},
  FJOURNAL = {Communications in Partial Differential Equations},
    VOLUME = {22},
      YEAR = {1997},
    NUMBER = {5-6},
     PAGES = {685--721},
      ISSN = {0360-5302,1532-4133},
   MRCLASS = {35Q30 (35K55 76C05)},
  MRNUMBER = {1452164},
MRREVIEWER = {Hamid\ Bellout},
       DOI = {10.1080/03605309708821280},
       URL = {https://doi.org/10.1080/03605309708821280},
}
    
\end{document}